\theoremstyle{plain}
\newtheorem{theorem}{Theorem}[section]
\newtheorem{proposition}[theorem]{Proposition}
\newtheorem{lemma}[theorem]{Lemma}
\theoremstyle{definition}
\newtheorem{definition}[theorem]{Definition}
\theoremstyle{remark}
\newtheorem{remark}[theorem]{Remark}
\newcommand{\Bk}{(\mathcal{B}_k)}
\newcommand{\C}{(\mathcal{D}_1)}
\newcommand{\arcs}{\stackrel{\leftrightarrow}{a}\!} % number of arcs
\numberwithin{equation}{section}
\begin{document}

\title[On dominating pair degree conditions for hamiltonicity]{On dominating pair degree conditions for hamiltonicity in balanced bipartite digraphs}
\author{Janusz Adamus}
\address{J.Adamus, Department of Mathematics, The University of Western Ontario, London, Ontario N6A 5B7 Canada}
\email{jadamus@uwo.ca}
\thanks{The research was partially supported by Natural Sciences and Engineering Research Council of Canada.}
\subjclass[2010]{05C20, 05C38, 05C45}
\keywords{digraph, bipartite digraph, cycle, hamiltonicity, bipancyclicity, degree condition, dominating pair}

\begin{abstract}
We prove several new sufficient conditions for hamiltonicity and bipancyclicity in balanced bipartite digraphs, in terms of sums of degrees over dominating or dominated pairs of vertices.
\end{abstract}
\maketitle

%%%%%%%%%%%%%%%%%%%%%%%%%%%%%%%%%%%%%%%%%%%%%%%%%%
%%%%%%%%%%%%%%%%%%%%%%%%%%%%%%%%%%%%%%%%%%%%%%%%%%

\section{Introduction}
\label{sec:intro}

This article is concerned with sufficient conditions for hamiltonicity and bipancyclicity in balanced bipartite digraphs. More specifically, we study several Meyniel-type criteria, that is, theorems asserting existence of hamiltonian cycles under certain conditions on the sums of degrees of non-adjacent vertices.
There are numerous such criteria, and open problems, in general digraphs (see, e.g., \cite{BG, BGL} and the references therein). Over the last few years, various analogues of these theorems and conjectures have been established for bipartite digraphs \cite{A1, A2, AAY, D, M, W, WW}. These results, generally speaking, do not follow from their non-bipartite analogues and require different arguments and techniques.

We begin with a short review of the relevant results to provide context for our present work. Throughout this paper, $D$ denotes a strongly connected balanced bipartite digraph of order $2a$ (see Section~\ref{sec:not} for details on notation and terminology).

The main feature of Meyniel-type criteria is that a degree condition be only imposed on pairs of non-adjacent vertices. The first such criterion in the bipartite setting was proved in \cite{AAY}.

\begin{theorem}[{\cite[Thm.\,1.2]{AAY}}]
\label{thm:AAY}
Let $D$ be as above, with $a\geq2$, and suppose that
\[
d(u)+d(v)\geq3a
\]
for every pair of distinct vertices $\{u,v\}$ such that $uv\notin A(D)$ and $vu\notin A(D)$. Then, $D$ is hamiltonian.
\end{theorem}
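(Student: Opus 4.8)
The plan is a proof by contradiction organized around a longest cycle together with path-insertion (``rotation'') inequalities --- the standard Meyniel-type machinery, adapted to the bipartite setting, where a single vertex cannot be spliced into a cycle for parity reasons. Suppose $D$ is not hamiltonian. Since $D$ is strong it contains a cycle; let $C$ be a longest one. As $D$ is bipartite, $|V(C)|=2k$, and by assumption $k<a$, so exactly $a-k\geq 1$ vertices of each colour class lie outside $C$. I will use repeatedly the elementary bipartite bounds $d^{+}(w),d^{-}(w)\leq a$ and $d(w)\leq 2a$ for every vertex $w$, together with the fact that, for $w\notin V(C)$, the number of out-neighbours (resp.\ in-neighbours) of $w$ lying outside $C$ is at most $a-k$.

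\emph{Insertion inequalities.} The leverage comes from $C$ being a longest cycle: if $Q=w_1w_2\cdots w_t$ is a path all of whose vertices avoid $C$, with $t$ of the parity forced by the alternating colouring of $C$ (so $t$ odd), and if some arc $xy$ of $C$ satisfies $x\to w_1$ and $w_t\to y$, then $C-xy+x\,w_1\cdots w_t\,y$ is a strictly longer cycle --- a contradiction. Applied to a single exterior arc $u\to v$ (its ends necessarily in opposite colour classes), this forces the ``entry set'' of $u$ and the ``exit set'' of $v$, read off along the suitably oriented arcs of $C$, to be disjoint, whence $d^{-}_{C}(u)+d^{+}_{C}(v)\leq k$; applying the same reasoning in the reverse digraph gives $d^{+}_{C}(u')+d^{-}_{C}(v')\leq k$ for an exterior arc $v'\to u'$. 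One also needs the length-$\geq 3$ versions, applied to paths leaving $D-V(C)$.

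\emph{Choosing the pair and closing.} To invoke the hypothesis I need a non-adjacent pair $\{u,v\}$ with $d(u)+d(v)$ provably below $3a$. Two natural sources: (a) two vertices of the same colour class lying outside $C$ --- automatically non-adjacent --- available whenever $a-k\geq 2$; and (b) in general, a longest path inside $D-V(C)$, whose endpoints, by maximality, have all their \emph{exterior} in-/out-neighbours on that path, combined with strong connectivity, which forces arcs between the path and $C$ and thereby supplies the exterior arcs that feed the inequalities above. For such a pair one bounds the $C$-degrees by the insertion inequalities, bounds the exterior degrees by a multiple of $a-k$, and sums; the arithmetic is arranged so that the total falls below $3a$. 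Note that the crude bound $d_{C}(\cdot)\leq 2k$ by itself only yields roughly $4a$, so the insertion inequalities must genuinely be used in sharp form --- this is exactly what pins the constant at $3a$, which is best possible, as extremal examples built from complete bipartite pieces show. (An alternative opening move is to first establish, via Hall's theorem on an auxiliary bipartite graph, that $D$ has a cycle factor, and then merge cycles of a factor with fewest cycles; the degree bookkeeping is of comparable difficulty.)

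\emph{Main obstacle.} The real work is the case analysis in $k$. When $k$ is close to $a$ the exterior is tiny --- so the ``$\leq a-k$'' bounds are strong --- but there are too few exterior vertices to build the paths used in (b), forcing heavy reliance on strong connectivity and the insertion inequalities; when $k$ is small the exterior is large but the insertion inequalities are comparatively weak. The boundary case $a-k=1$ (a single vertex of each colour class off $C$) is the most delicate, since then the only available non-adjacent pairs mix an exterior vertex with a vertex of $C$, whose degree must be controlled through the cyclic structure itself. Driving every sub-case down to the exact threshold $3a$ is the crux of the argument.
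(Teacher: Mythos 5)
This theorem is not proved in the present paper at all: it is quoted from \cite{AAY}, and the published argument (echoed in Section~\ref{sec:hamil-proof} for Theorem~\ref{thm:hamil}) runs through a cycle factor, not a longest cycle. One first shows via K\"onig--Hall that $D$ has a cycle factor, takes a factor $\{C_1,\dots,C_l\}$ with $l$ minimal, and then uses an arc-counting bound of the type of Lemma~\ref{lem:4}, $\arcs(V(C_1),V(D)\setminus V(C_1))\leq |C_1|(2a-|C_1|)/2$, to force a degree sum below $3a$ for a suitable non-adjacent pair when $l\geq2$. Your parenthetical ``alternative opening move'' is in fact the route the authors take; your main plan (longest cycle plus insertion/rotation inequalities) is a genuinely different, and in the bipartite digraph setting considerably more delicate, strategy.

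As it stands, however, your proposal has a real gap rather than just a different route: the decisive step is never carried out. You correctly derive the insertion inequality $d^-_C(u)+d^+_C(v)\leq k$ for an exterior arc $uv$, but the hypothesis only applies to \emph{non-adjacent} pairs, and for those you never exhibit a bound on their $C$-degrees that beats the trivial $d(w)\leq 2a$: for a single exterior vertex no insertion is possible (parity), so two same-colour exterior vertices give only $d(u)+d(u')\leq 4a$, and closing the gap to $3a$ requires precisely the path-based analysis you defer. Your own text concedes this (``the arithmetic is arranged so that the total falls below $3a$'', ``driving every sub-case down to the exact threshold $3a$ is the crux''), including the hardest case $a-k=1$, which is left entirely open; an outline that postpones the crux is not yet a proof. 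There is also a small parity slip: a path $w_1\cdots w_t$ inserted in place of an arc of $C$ must have an even number of vertices (its endpoints lie in opposite partite sets), so $t$ is even, not odd. If you want a complete argument along established lines, prove first that $D$ has a cycle factor (as in Lemma~\ref{lem:3}, where the non-adjacency of vertices with a common out-neighbour inside a Hall violator supplies the pair with degree sum $\geq 3a$), then work with a minimal cycle factor and the counting bound of Lemma~\ref{lem:4}.
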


The lower bound of $3a$ is sharp (see examples in \cite{AAY}). The condition from Theorem~\ref{thm:AAY} may be further strengthened, in the spirit of~\cite{BGL}, by requiring that it be satisfied only by dominating and dominated pairs of vertices. This was done in \cite{A1}.

\begin{theorem}[{\cite[Thm.\,1]{A1}}]
\label{thm:A1}
Let $D$ be as above, with $a\geq3$, and suppose that
\[
d(u)+d(v)\geq3a
\]
whenever $\{u,v\}$ is a dominating or dominated pair. Then, $D$ is hamiltonian.
\end{theorem}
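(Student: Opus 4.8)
The plan is to argue by contradiction, starting from a longest cycle. Suppose $D$ is strongly connected, satisfies the hypothesis, but is not hamiltonian, and let $C = v_1 v_2 \cdots v_{2k} v_1$ be a longest cycle of $D$; then $1 \le k \le a-1$, so $R := V(D) \setminus V(C)$ is nonempty, with $a-k$ vertices in each colour class. Since $D$ is strong, $C$ has a \emph{bypass}, i.e.\ a path $B = v_1 x_1 x_2 \cdots x_p v_m$ with $p \ge 1$ internal vertices, all lying in $R$, and $v_1, v_m \in V(C)$ (after relabelling $C$); fix one with $p$ minimum. Minimality of $p$ forces, when $p \ge 2$, that $x_1$ has no out-neighbour and $x_p$ no in-neighbour on $C$ and that $x_2,\dots,x_{p-1}$ have no neighbour on $C$ at all, while maximality of $C$ forces a ``spreading'' condition: for every in-neighbour $v_i$ of $x_1$ and every out-neighbour $v_j$ of $x_p$ on $C$, the forward segment of $C$ from $v_i$ to $v_j$ must be long enough that rerouting $C$ through $B$ would not produce a longer cycle. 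The degenerate case $p=1$, where the single vertex $x_1 \in R$ may have many neighbours on $C$, is treated in parallel.

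The key step is to harvest non-adjacent dominating and dominated pairs from this configuration and play the hypothesis against the sparseness it imposes. From $x_p \to v_m$ together with the cycle arc $v_{m-1} \to v_m$ one gets that $\{x_p, v_{m-1}\}$ is a dominating pair, and when $p \ge 2$ it is non-adjacent, since an arc in either direction between $x_p$ and $v_{m-1}$ would violate either the minimality of $p$ or the fact that all out-neighbours of $x_p$ lie in a single colour class. A similar, but more delicate, analysis at the ``left'' end of $B$ and at the gaps between consecutive neighbours of $x_1$ and of $x_p$ along $C$ yields further non-adjacent dominating and dominated pairs — among cycle vertices, or between a cycle vertex and $x_1$ or $x_p$ — each of which either has degree-sum at least $3a$ by hypothesis, or permits a reroute of $C$ into a longer cycle, a contradiction. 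Combining these inequalities with the degree bounds the structure forces — $N^+(x_1), N^-(x_p) \subseteq R$, so $d^+(x_1), d^-(x_p) \le a-k$ when $p\ge 2$; the interior vertices of $B$ are nearly isolated from $C$; and $|R| = 2(a-k)$ is small — overdetermines the relevant degrees and exhibits a dominating or dominated pair $\{u,v\}$ with $d(u)+d(v) < 3a$, contrary to hypothesis. It may streamline the case analysis to first invoke Theorem~\ref{thm:AAY}: since $D$ is not hamiltonian, some non-adjacent pair $\{u,v\}$ has $d(u)+d(v) \le 3a-1$, and if that pair lies in one colour class the present hypothesis additionally forces $N^+(u)\cap N^+(v) = \emptyset = N^-(u)\cap N^-(v)$, which can be fed into the bypass analysis.

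I expect the main obstacle to be the bookkeeping in the rerouting arguments: one must track the positions on $C$ of all neighbours of $x_1$ and of $x_p$, honour the bipartite parity constraint that the out-neighbourhood (resp.\ in-neighbourhood) of any vertex lies entirely in one colour class, rule out every way a reroute through $B$ could lengthen $C$, and simultaneously certify that each pair to which the degree hypothesis is applied is genuinely non-adjacent and genuinely dominating or dominated — pairs failing either property slip through the hypothesis and must instead be controlled by direct counting of arcs between $R$ and $V(C)$. The degenerate configurations $p=1$, $k=1$ (a digon as longest cycle), and small values of $a$ (the hypothesis requires $a \ge 3$, one step above the threshold in Theorem~\ref{thm:AAY}) will presumably have to be dispatched on their own before the main count goes through.
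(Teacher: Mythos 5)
There is a genuine gap: what you have written is a program, not a proof. The decisive steps are precisely the ones you defer --- ``a similar, but more delicate, analysis \ldots{} yields further non-adjacent dominating and dominated pairs'' and ``combining these inequalities \ldots{} overdetermines the relevant degrees and exhibits a dominating or dominated pair with degree sum $<3a$'' --- and these are the entire content of the theorem. The difficulty they hide is real: the hypothesis of Theorem~\ref{thm:A1} says nothing about pairs that are neither dominating nor dominated, and your bypass configuration does not automatically manufacture pairs to which the hypothesis applies (the interior vertices of a minimal bypass, for instance, may dominate nothing on $C$, and two vertices of $C$ on opposite sides of a gap need not share an out- or in-neighbour). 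In a balanced bipartite digraph the parity constraint also breaks the standard multi-insertion counting from the non-bipartite Meyniel/Bondy--Thomassen arguments, which is exactly why, as the introduction notes, these bipartite results ``do not follow from their non-bipartite analogues.'' A smaller point: the hypothesis here is imposed on \emph{all} dominating or dominated pairs, adjacent or not, so your repeated certification of non-adjacency is unnecessary and buys you nothing (non-adjacency is the relevant notion only in Theorem~\ref{thm:AAY}); what you must certify is only that each pair you use really is dominating or dominated, and that is where your sketch stops short.

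For comparison, this paper does not prove Theorem~\ref{thm:A1} --- it is quoted from \cite{A1} --- but the proof there, like the proof of Theorem~\ref{thm:hamil} in Section~\ref{sec:hamil-proof}, takes a different route from yours: one first establishes a cycle factor via a K{\"o}nig--Hall argument (as in Lemma~\ref{lem:3}), then takes a cycle factor with the minimum number of cycles, bounds $\arcs(V(C_1),V(D)\setminus V(C_1))$ for the smallest cycle $C_1$ as in Lemma~\ref{lem:4}, and uses the degree-sum condition on dominating/dominated pairs inside and around $C_1$ to either merge $C_1$ into another cycle of the factor (contradicting minimality) or contradict the arc bound. If you want to salvage your longest-cycle/bypass approach you would have to carry out the rerouting and counting in full, handle $p=1$, $k=1$ and the small cases explicitly, and in particular find a substitute for the hypothesis on the many pairs in your configuration that are not dominating or dominated; switching to the cycle-factor/merging template is the far more promising path, and Section~\ref{sec:hamil-proof} of this paper is a ready-made model for how that counting goes.
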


At this point, there are two natural questions: First, are the above assumptions enough to imply existence of cycles of all even lengths in $D$, perhaps modulo some exceptional digraphs (Bondy's metaconjecture)? And secondly, could we expect the same conclusion if the degree sum condition was only satisfied by the dominating pairs of vertices? The answer to the first question is positive. More precisely, we have the following result.

\begin{theorem}[{\cite[Thm.\,1.3]{A2}}]
\label{thm:A2}
Let $D$ be as above, with $a\geq3$, and suppose that
\[
d(u)+d(v)\geq3a
\]
whenever $\{u,v\}$ is a dominating or dominated pair. Then, $D$ is either bipancyclic or a directed cycle of length $2a$.
\end{theorem}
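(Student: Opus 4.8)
The plan is to deduce hamiltonicity from Theorem~\ref{thm:A1} and then produce the remaining even cycle lengths by a downward induction, having first disposed of the digon-free case, which is immediate. Indeed, if $D$ has no digon (cycle of length $2$) then the out- and in-neighbourhoods of any vertex are disjoint subsets of the opposite colour class, so $d(v)\le a$ for all $v$; hence no dominating or dominated pair can have degree sum $\ge 3a$, so $D$ has no dominating and no dominated pair, i.e.\ every vertex has out-degree and in-degree at most $1$. A strongly connected digraph on $2a$ vertices with this property is the directed cycle of length $2a$, and we are done. So assume henceforth that $D$ has a digon; then $D$ already contains a cycle of length $2$, it is not the directed $2a$-cycle, and --- having more than $2a$ arcs --- its hamiltonian cycle $C$ from Theorem~\ref{thm:A1} has a chord. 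I also record the recurring device that any chord produces a vertex of out-degree at least $2$, hence a dominating pair with degree sum $\ge 3a$ by hypothesis, hence a vertex $u_0$ with $d^+(u_0)\ge a/2$ and $d^-(u_0)\ge a/2$.

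It remains to exhibit a cycle of every even length from $4$ to $2a$; together with the digon and the hamiltonian cycle this is bipancyclicity. The heart of the matter is the descent step: \emph{if $D$ contains a cycle of length $\ell$ with $6\le\ell\le 2a$, then $D$ contains a cycle of length $\ell-2$}, from which the claim follows by iteration starting at $\ell=2a$. Fix a cycle $Q=u_1u_2\cdots u_\ell u_1$ and put $R=V(D)\setminus V(Q)$. The elementary shortening mechanisms are a chord $u_iu_{i+3}$ of $Q$ (indices mod $\ell$), which excises $u_{i+1},u_{i+2}$ and yields a cycle of length $\ell-2$, and a detour through $R$ that re-enters $Q$ three positions later, for instance arcs $u_iz$ and $zu_{i+4}$ with $z\in R$; more generally, any re-routing that shortens a subpath of $Q$ by exactly $2$ while respecting the bipartition suffices. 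Assuming no such shortening is present, the arcs incident with $V(Q)$ --- the chords of $Q$ together with the arcs between $V(Q)$ and $R$ --- are severely constrained, giving an upper bound on $\sum_{j=1}^{\ell} d(u_j)$. Against it one plays lower bounds from the hypothesis, applied to pairs among the $u_j$ that can be certified as dominating or dominated --- notably pairs of out-neighbours or in-neighbours of a common vertex, $u_0$ being a natural source of such pairs, or pairs arising from chords of $Q$. Summing the inequalities $d(u)+d(v)\ge 3a$ over a well-chosen family of such pairs should contradict the upper bound, unless $Q$ with its admissible chords is pinned to a configuration from which a cycle of length $\ell-2$ can after all be read off, or unless $D$ collapses onto $C$ --- which is excluded since $D$ has a digon.

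The hard part, as usual for these Bondy-metaconjecture-type statements, will be precisely this extremal analysis within the descent step: showing that the simultaneous absence of all shortening patterns, together with the dominating/dominated-pair bound $3a$, has no model besides the directed $2a$-cycle. The specific delicacy of the present, weakened hypothesis is that the degree inequality may only be invoked for pairs that are \emph{provably} dominating or dominated in $D$, so the argument must keep producing common in- or out-neighbours for the pairs it weighs; I would expect a case split according to the number and position of the chords of $Q$ relative to $u_0$. I would also anticipate that the smallest instances $\ell\in\{6,8\}$ (and accordingly $a\in\{3,4\}$), where $R$ is too small for the counting to be comfortable, require a separate, hands-on treatment.
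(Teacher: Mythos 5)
This statement is quoted in the paper from \cite{A2} and is not proved there, so the only question is whether your argument stands on its own --- and it does not, because the step that carries the entire content of the theorem is not actually proved. Your preliminary reductions are fine: the digon-free case correctly forces $d(v)\le a$ for all $v$, hence no dominating or dominated pairs, hence all in- and out-degrees at most $1$ and, by strong connectivity, the directed $2a$-cycle; and Theorem~\ref{thm:A1} does give a Hamilton cycle. But the descent step --- ``if $D$ contains a cycle of length $\ell\ge 6$ and none of the shortening patterns (a chord $u_iu_{i+3}$, a detour through $R$ re-entering three positions later, etc.) is present, then summing the $3a$-bound over a well-chosen family of dominating/dominated pairs contradicts an upper bound on $\sum_j d(u_j)$'' --- is stated in the conditional (``should contradict'', ``I would expect a case split'', ``I would anticipate \dots a separate, hands-on treatment'') and never carried out. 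That extremal analysis is precisely the theorem; without it the proposal is a plan, not a proof.

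Two concrete reasons why this gap is not routine to close. First, as you yourself note, the hypothesis can only be invoked on pairs that are certifiably dominating or dominated, and for an arbitrary cycle $Q$ of length $\ell<2a$ there is no a priori supply of such pairs among $V(Q)$: the vertices of $Q$ may send all their arcs into $R=V(D)\setminus V(Q)$, so the absence of the listed shortening patterns does not by itself yield any upper bound on $\sum_{j}d(u_j)$ that the $3a$-condition could violate --- arcs into $R$ that re-enter $Q$ at ``harmless'' distances are unconstrained. So the descent claim, as formulated for an arbitrary $\ell$-cycle, is not supported by the counting you describe; known arguments of this type (including the actual proof in \cite{A2}) work along a fixed Hamilton cycle and a carefully chosen missing cycle length, with a genuinely intricate case analysis, rather than by a uniform $\ell\to\ell-2$ induction over all cycles. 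Second, even in the case $\ell=2a$, where $R=\varnothing$ and all non-cycle arcs are chords, identifying which pairs of cycle vertices are dominating or dominated, and showing that the resulting inequalities are incompatible with the absence of all shortenings (or else force a readable $(\ell-2)$-cycle), is the hard extremal work; declaring that it ``has no model besides the directed $2a$-cycle'' assumes the conclusion. As it stands, the proposal reduces the theorem to an unproved statement that is essentially equivalent to it.
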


The second question seems much harder. However, most recently, Wang and Wu~\cite{WW} proposed an interesting variant of the degree sum condition that allows them to obtain hamiltonicity by only imposing the condition on dominating pairs of vertices. 

\begin{theorem}[{\cite[Thm.\,1.10]{WW}}]
\label{thm:WW}
Let $D$ be a strongly connected balanced bipartite digraph of order $2a$, where $a\geq3$, and let $k$ be an integer satisfying $\max\{1,\frac{a}{4}\}<k\leq\frac{a}{2}$.
Suppose that for every dominating pair $\{u,v\}$ of vertices in $D$,
\[
d(u)\geq2a-k\ \mathrm{and}\ d(v)\geq a+k,\quad\mathrm{or}\quad d(u)\geq a+k\ \mathrm{and}\ d(v)\geq2a-k.
\]
Then, $D$ is hamiltonian.
\end{theorem}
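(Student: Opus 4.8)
The plan is to argue by contradiction, following the standard paradigm for Meyniel-type results in bipartite digraphs. Suppose $D$ satisfies the hypotheses but is not hamiltonian, and let $C = x_1 x_2 \cdots x_{2m} x_1$ be a longest directed cycle in $D$, where $m < a$; since $D$ is strongly connected and balanced bipartite, such a cycle exists and has even length $2m \geq 2$. Let $H = D - V(C)$ be the subdigraph induced on the $2(a-m)$ vertices outside $C$. The first task is to extract a convenient path structure off the cycle: by a routine maximality argument, one shows that $H$ contains a path $P = y_1 y_2 \cdots y_\ell$ (possibly a single vertex) that cannot be ``rotated into'' $C$, and in particular the endpoints of a longest path in $H$ together with the attachment vertices on $C$ yield a pair of vertices to which we want to apply the degree hypothesis. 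The crucial observation, as in \cite{A1,WW}, is that from maximality of $C$ one can produce a \emph{dominating pair} $\{u,v\}$ — typically $u$ is an out-neighbour-blocked vertex on $C$ and $v$ is an endpoint (or near-endpoint) of the external path — so that the degree-sum-type hypothesis becomes applicable.

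Once a dominating pair $\{u,v\}$ is in hand, the hypothesis gives the asymmetric bound: one of $u,v$ has degree at least $2a-k$ and the other at least $a+k$. The heart of the proof is then a counting argument on the arcs between $\{u,v\}$ (or their neighbourhoods) and the cycle $C$, exploiting the ``no longer cycle'' condition to forbid certain configurations of chords. Concretely, for each vertex $x_i$ on $C$ one classifies the arcs from $u$ and into $v$ (and vice versa) and shows that a $C$-extending cycle is forced unless the arcs avoid a large ``forbidden'' pattern; summing the forbidden constraints over all of $C$ and over $H$, and plugging in $d(u) \geq 2a-k$, $d(v) \geq a+k$, one derives $2m \geq 2a$, contradicting $m < a$. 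The role of the parameter $k$ with $\frac{a}{4} < k \leq \frac{a}{2}$ is precisely to make this inequality chain close: $k$ too small makes the $a+k$ bound too weak, and the lower constraint $k > a/4$ is what is needed so that the two asymmetric bounds, when combined through the chord-counting, still sum past $2a$. I would also expect to need a separate, easier treatment of the base/extremal cases — e.g. when $H$ is a single vertex ($m = a-1$), or when $C$ is very short — handled by direct inspection or by invoking strong connectivity.

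I expect the main obstacle to be the chord-counting step, specifically organizing the case analysis of how the out-neighbourhood of $u$ and the in-neighbourhood of $v$ interleave along $C$. Unlike the symmetric condition $d(u)+d(v)\geq 3a$, here the two vertices play genuinely different roles (one ``rich'' with degree $\approx 2a$, one ``moderately rich'' with degree $\approx a + k$), so the pairing-of-arcs trick (if $x_i \in N^+(u)$ then $x_{i-1} \notin N^-(v)$, to block a bypass) must be applied with the two vertices in the correct roles, and one must be careful that the path $P$ outside $C$ can genuinely be spliced in at both ends. A secondary technical point is ensuring the dominating pair we produce really is \emph{dominating} (a common out-neighbour exists) rather than merely ``dominated'', since the hypothesis of Theorem~\ref{thm:WW} is stated only for dominating pairs; this may require choosing the orientation of the external path and the attachment vertex with care, or splitting into two symmetric subcases according to which of $\{u,v\}$ dominates. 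Finally, I would double-check the boundary of the range of $k$ (the case $k = a/2$, and whether $a$ even/odd matters) since the inequalities are tight there.
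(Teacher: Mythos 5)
This statement is not proved in the paper at all: Theorem~\ref{thm:WW} is quoted from Wang and Wu~\cite{WW} and used as a black box (in Section~\ref{sec:bipart-proof}), so there is no internal proof to compare against. Judged on its own merits, your proposal is a strategy outline rather than a proof, and the decisive steps are exactly the ones left unargued. You posit a longest cycle $C$ with $m<a$, an external path $P$, a dominating pair $\{u,v\}$ ``produced from maximality of $C$'', and a chord-counting argument that ``derives $2m\geq 2a$'' --- but none of these is carried out. In particular, the hypothesis here constrains \emph{only} dominating pairs, so a vertex that lies in no dominating pair may have arbitrarily small degree; before any counting can start you must either manufacture dominating pairs involving the specific vertices you want to count from (e.g.\ an off-cycle vertex together with the predecessor of a common out-neighbour on $C$), or rule such low-degree vertices out, and your sketch only gestures at this. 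Likewise, the point where $k>\frac{a}{4}$ enters is described heuristically (``is what is needed so that the bounds sum past $2a$'') but never exhibited as an inequality; since this is precisely the delicate, sharp part of the Wang--Wu result, asserting it does not constitute a proof. As written, the proposal would not be checkable or repairable without essentially inventing the whole argument.

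It is also worth noting that your chosen route (longest cycle plus path insertion, in the style of Meyniel-type proofs for general digraphs) is not the one used in this line of work. The known proofs --- in \cite{AAY,A1,WW}, and the analogous argument in Section~\ref{sec:hamil-proof} of this paper for condition $(\mathcal{D}_1)$ --- proceed via cycle factors: one first shows $|N^+(S)|\geq|S|$ for all $S$ in either partite set (K\"onig--Hall) to obtain a cycle factor, then takes a cycle factor with the minimal number of cycles, bounds $\arcs(V(C_1),V(D)\setminus V(C_1))$ as in Lemma~\ref{lem:4}, and derives a contradiction by merging cycles. If you want to reconstruct a proof of Theorem~\ref{thm:WW}, adapting that cycle-factor framework to the asymmetric bounds $d(u)\geq 2a-k$, $d(v)\geq a+k$ is the realistic path; a longest-cycle insertion argument in the balanced bipartite setting with a dominating-pair-only hypothesis would require substantial new ideas that your outline does not supply.
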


The authors of~\cite{WW} posed also several interesting problems related to the above theorems. Among them:
\begin{itemize}
\item[(a)] Are the assumptions of Theorem~\ref{thm:WW} enough to imply bipancyclicity of $D$?
\item[(b)] Is there an integer $k\geq0$ such that $D$ is hamiltonian if the inequality $d(u)+d(v)\geq3a+k$ is only imposed on the dominating pairs $\{u,v\}$?
\end{itemize}
The main goal of the present article is to prove the following positive answers to these two questions.

\begin{theorem}
\label{thm:bipart}
If $D$ satisfies the hypotheses of Theorem~\ref{thm:WW}, then $D$ is either bipancyclic or a directed cycle of length $2a$.
\end{theorem}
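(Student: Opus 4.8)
By Theorem~\ref{thm:WW} the digraph $D$ is hamiltonian; fix a hamiltonian cycle $H=x_1x_2\cdots x_{2a}x_1$, the two colour classes being the odd- and even-indexed vertices (indices read modulo $2a$). If $A(D)=A(H)$ then $D$ is the directed cycle of length $2a$ and we are in the second alternative of the statement, so assume henceforth that $H$ has a chord; we must then prove that $D$ is bipancyclic, i.e.\ has a cycle of every even length $2\ell$ with $2\le\ell\le a$. Since $H$ already supplies the length $2a$ and a downward induction reduces everything to a single step, it suffices to establish the \emph{shortening lemma}: if $D$ contains a cycle of length $2\ell$ with $3\le\ell\le a$, then $D$ contains a cycle of length $2\ell-2$.

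For the shortening lemma I would adapt the cycle-building technique used for hamiltonicity in \cite{AAY,A1,WW}, but run it against a prescribed target length. Begin by recording that the hypothesis is not vacuous: a chord of $H$ forces some vertex to have out-degree at least $2$, hence $D$ has a dominating pair, hence some vertex has degree at least $2a-k\ge\frac32a$ and — because $k\le\frac a2$ — every vertex lying in a dominating pair has degree at least $a+k>\frac54a$. Now let $C=y_1y_2\cdots y_{2\ell}y_1$ be a cycle of length $2\ell$ and suppose, for contradiction, that $D$ has no cycle of length $2\ell-2$. Looking at $C$ together with the vertices off $C$, the absence of a shorter cycle rules out many shortcut arcs (most basically, no chord $y_i\to y_{i+3}$, and more generally no pair of crossing chords, nor a chord combined with an external detour, that would contract $C$ by two). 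Translating these forbidden configurations into positive information, one locates a family of pairs $\{y_i,y_j\}$ of $C$-vertices in a common colour class, each possessing a common in-neighbour — supplied either by $C$ itself or by one of the surviving chords — hence each a dominating pair to which the split bound $d\ge 2a-k$, $d\ge a+k$ applies. Summing these bounds over a well-chosen family and comparing with the trivial estimate $d(v)\le 2a$ yields a contradiction provided $\ell$ is not too small; the lower bound $k>\frac a4$ is exactly what makes the resulting counting inequality strict. The remaining small instances — the $4$-cycle case $\ell=3$ and the first step $\ell=a$ down from $H$ (where there are no vertices off $C$) — would be dispatched by a direct analysis of an individual chord.

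The step I expect to be the real obstacle is the mismatch of orientations: the hypothesis constrains \emph{dominating} pairs only, whereas the shortcut obstructions on a cycle just as readily produce \emph{dominated} pairs (common out-neighbours), about which we know nothing. One therefore has to be careful about which endpoint of each forbidden shortcut serves as a pivot and in which direction $C$ is traversed, so that every pair fed into the degree hypothesis genuinely has a common in-neighbour; and passing to the converse digraph $D^{-1}$ is of no help, since bipancyclicity is self-converse but the degree hypothesis is not. A secondary, more routine difficulty is the near-extremal regime: when the degrees hover around the thresholds $2a-k$ and $a+k$ the structure of $D$ is nearly rigid, and one must check by hand that the length-$(2\ell-2)$ cycle is still present in each such boundary configuration — this is where the bulk of the case analysis will sit.
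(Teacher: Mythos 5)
Your plan is not a proof: the entire argument rests on a ``shortening lemma'' (a $2\ell$-cycle yields a $(2\ell-2)$-cycle) that you never establish, and the obstacle you yourself flag at the end is precisely the crux rather than a technicality. The forbidden-shortcut configurations on a cycle naturally give pairs of vertices with a common \emph{out}-neighbour, i.e.\ dominated pairs, about which condition $(\mathcal{B}_k)$ says nothing; no orientation bookkeeping is offered that converts them into dominating pairs, and your preliminary observation already stumbles on this point (a vertex of out-degree at least $2$ witnesses a dominated pair, not a dominating one — for a dominating pair you want the head of a chord, which has in-degree at least $2$). The counting step ``summing these bounds over a well-chosen family yields a contradiction'' is never carried out, the base/boundary cases ($\ell=3$, $\ell=a$, and all near-threshold degree configurations) are deferred to ``direct analysis'', and length-$2$ cycles are silently excluded from your notion of bipancyclicity even though they too require an argument (every vertex must first be shown to lie in a dominating pair, which fails to be automatic when $D$ could equal its Hamilton cycle).

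For comparison, the paper does not shorten cycles at all. It first invokes Theorem~\ref{thm:A2} to dispose of the case where every dominating or dominated pair has degree sum at least $3a$; otherwise $k<\frac a2$, and a count shows that each partite set contains at most $p\le 3$ vertices that pairwise fail to dominate, all other vertices having degree at least $2a-k$. It then constructs (by explicit cycle surgery, with the extremal situations handled in Lemma~\ref{lem:nasty} and Section~\ref{subsec:special-cases} for $k=2$, $a=6,7$) a Hamilton cycle $C=[y_1,x_1,\dots,y_a,x_a]$ with $d^+(x_i)+d^-(y_i)\ge a+2$ for all $i$, contracts each pair $(x_i,y_i)$ to a single vertex of an $a$-vertex digraph $G$, and applies Thomassen's Theorem~\ref{thm:thomassen} to conclude $G$ is pancyclic; cycles of length $\ell$ in $G$ lift to cycles of length $2\ell$ in $D$. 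The volume of extremal case analysis needed even along that route is a strong indication that the near-rigid configurations you postpone are where the real difficulty sits, so as it stands your proposal has a genuine gap at its central step.
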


\begin{theorem}
\label{thm:hamil}
Let $D$ be a strongly connected balanced bipartite digraph of order $2a$, where $a\geq2$. Suppose that for every dominating pair $\{u,v\}$ of vertices in $D$,
\[
d(u)+d(v)\geq3a+1\,.
\]
Then, $D$ is hamiltonian.
\end{theorem}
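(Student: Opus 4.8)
The plan is to isolate the combinatorial core of the statement and then deduce the theorem by a comparison of bounds. Precisely, I would prove the following \emph{main step}: if $D$ is a strongly connected balanced bipartite digraph of order $2a$ with $a\ge 2$ that is \emph{not} hamiltonian, then $D$ contains a dominating pair $\{u,v\}$ with $d(u)+d(v)\le 3a$. Granting this, Theorem~\ref{thm:hamil} is immediate: if every dominating pair satisfied $d(u)+d(v)\ge 3a+1$, the main step would be contradicted, so $D$ must be hamiltonian. This also explains why $3a+1$, and not $3a$, is the right threshold once one restricts to dominating pairs. Observe that a dominating pair necessarily lies within one partite set — a common out-neighbour of $u$ and $v$ forces both of them into the part avoiding it — so it is automatically a non-adjacent pair, and the degree-sum bound is the only thing to beat.

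To establish the main step I argue by contradiction under the stated hypotheses. Let $C=x_1x_2\cdots x_{2m}x_1$ be a longest cycle of $D$; this exists since $D$ is strong, and non-hamiltonicity forces $1\le m<a$, so $H:=D-V(C)$ is nonempty with exactly $a-m$ vertices in each partite set. Strong connectivity produces a \emph{bypass} of $C$: a path $P=x_i\,w_1\cdots w_t\,x_j$ with $x_i,x_j\in V(C)$, interior vertices $w_1,\dots,w_t\in V(H)$ and $t\ge1$. I would take $P$ with $t$ minimal and, in order to control the vertices of $H$ lying off $P$, also fix a longest path of the induced subdigraph $D[V(H)]$. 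Maximality of $C$ now forbids many configurations: no bypass may be spliced into $C$ so as to yield a longer cycle — in particular there is no bypass of the shape $x_k\,w\,w'\,x_{k+1}$ (it would insert the pair $w,w'$ between two consecutive vertices of $C$), and the directed $C$-arc from $x_i$ to $x_j$ must be at least as long as $P$. Combining these exclusions with the standard crossing-arc restrictions — if $w\to x_k$ on $C$, then $x_{k-1}$ cannot reach certain vertices attached to $P$, and symmetrically — and feeding them into an arc count bounds the number of arcs joining suitable vertices to $V(C)$, hence bounds their degrees.

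The crux, and the step I expect to be hardest, is to carry out this count \emph{for a pair} $\{u,v\}$ rather than a single vertex: one must pick $u,v$ (among $w_1,\dots,w_t$, or among the endpoints of the longest path in $H$) so that the arcs blocked at $u$ and those blocked at $v$ are essentially disjoint and together account for all but at most $3a$ of the $4a$ arcs potentially incident to $\{u,v\}$, while at the same time ensuring that $\{u,v\}$ is genuinely a \emph{dominating} pair — with $x_i$, the vertex of $C$ launching $P$, or a common in-neighbour of $u$ and $v$ arising inside $H$ from the longest-path choice, serving as the dominating vertex. Pushing the bound all the way down to $3a$ — one arc tighter than a cruder single-vertex estimate — will require a careful choice of $C$ (longest, and extremal for a secondary parameter), of the bypass $P$, and of which of $u,v$ absorbs which family of forbidden arcs, along the lines of \cite{A1} and \cite{WW}. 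Finally, several degenerate cases — $t$ small, $|V(H)|=2$, $x_i=x_j$, $D[V(H)]$ strong, or $P$ almost spanning $H$ — will have to be disposed of by separate, more explicit arguments, and the base case $a=2$ checked by hand.
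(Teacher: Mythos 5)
Your reduction to the contrapositive (``non-hamiltonian $\Rightarrow$ some dominating pair has degree sum $\le 3a$'') is a trivial restatement of the theorem, so all of the content lies in what you call the main step, and that step is not actually carried out: the paragraph beginning ``The crux\dots'' explicitly defers the entire quantitative argument (choosing $u,v$ so that the forbidden-arc families are disjoint, certifying that $\{u,v\}$ is a dominating pair, and pushing the count down to $3a$), and the final sentence defers all degenerate cases as well. As it stands this is a plan, not a proof. There is also a concrete slip inside the plan: a pair $\{u,v\}$ is \emph{dominating} when $u$ and $v$ have a common \emph{out}-neighbour, yet you propose to certify this via $x_i$ (which sends an arc \emph{into} the bypass, so it is an in-neighbour of $w_1$) or via ``a common in-neighbour of $u$ and $v$''; either of these would only witness a \emph{dominated} pair. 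Producing a common out-neighbour for the two low-degree vertices you hope to find off the long cycle is precisely the difficulty that makes dominating-pair conditions harder than Meyniel-type conditions, and your sketch offers no mechanism for it.

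For comparison, the paper does not work with a longest cycle and bypass paths at all. It first shows (Lemma~\ref{lem:1}) that in a non-hamiltonian $D$ satisfying $(\mathcal{D}_1)$ every vertex belongs to some dominating pair, hence $d(u)\ge a+1$ for all $u$ (Lemma~\ref{lem:2}); then it uses the K\"onig--Hall theorem to extract a cycle factor (Lemma~\ref{lem:3}), takes a cycle factor with the minimum number of cycles, and applies the arc bound $\arcs(V(C_1),V(D)\setminus V(C_1))\le \tfrac{|C_1|(2a-|C_1|)}{2}$ from \cite{AAY} (Lemma~\ref{lem:4}) to the smallest cycle $C_1$. The contradiction is then obtained by a counting argument over $X\cap V(C_1)$, split according to whether $d_{C_1^c}(x_1)=a-t$ or $d_{C_1^c}(x_1)<a-t$, in which dominating pairs are produced by exhibiting common out-neighbours on $C_1$ itself (successors of dominated vertices on the cycle). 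If you want to salvage your route, you would need to supply an analogue of that machinery for the longest-cycle setting, including a replacement for the cycle-factor and minimality arguments that the paper leans on; simply invoking ``along the lines of \cite{A1} and \cite{WW}'' does not do this, since those papers also rest on cycle-factor-type decompositions rather than on longest cycles with bypasses.
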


Theorems~\ref{thm:bipart} and~\ref{thm:hamil} are proved in Sections~\ref{sec:bipart-proof} and~\ref{sec:hamil-proof}, respectively. In the last section, we discuss some corollaries and open problems.

%%%%%%%%%%%%%%%%%%%%%%%%%%%%%%%%%%%%%%%%%%%%%%%%%%
\medskip
%%%%%%%%%%%%%%%%%%%%%%%%%%%%%%%%%%%%%%%%%%%%%%%%%%

\section{Notation and terminology}
\label{sec:not}

We consider digraphs in the sense of \cite{BG}: A \emph{digraph} $D$ is a pair $(V(D),A(D))$, where $V(D)$ is a finite set (of \emph{vertices}) and $A(D)$ is a set of ordered pairs of distinct elements of $V(D)$, called \emph{arcs} (i.e., $D$ has no loops or multiple arcs).

The number of vertices $|V(D)|$ is the \emph{order} of $D$ (also denoted by $|D|$). For vertices $u$ and $v$ from $V(D)$, we write $uv\in A(D)$ to say that $A(D)$ contains the ordered pair $(u,v)$. If $uv\in A(D)$, then $u$ is called an \emph{in-neighbour} of $v$, and $v$ is an \emph{out-neighbour} of $u$. A pair of vertices $u,v\in V(D)$ is called \emph{dominating} (resp. \emph{dominated}) when there exists a vertex $w$ such that $uw\in A(D)$ and $vw\in A(D)$ (resp. $wu\in A(D)$ and $wv\in A(D)$). 

For vertex sets $S,T\subset V(D)$, denote by $A[S,T]$ the set of all arcs of $A(D)$ from a vertex in $S$ to a vertex in $T$. We define
${\arcs}(S,T)\coloneqq|A[S,T]|+|A[T,S]|$.

For a vertex set $S \subset V(D)$, we denote by $N^+(S)$ the set of vertices in $V(D)$ \emph{dominated} by the vertices of $S$; i.e.,
\[
N^+(S)=\{u\in V(D): vu\in A(D)\text{\ for\ some\ }v\in S\}\,.
\]
Similarly, $N^-(S)$ denotes the set of vertices of $V(D)$ \emph{dominating} vertices of $S$; i.e,
\[
N^-(S)=\{u\in V(D): uv\in A(D)\text{\ for\ some\ }v\in S\}\,.
\]
If $S=\{v\}$ is a single vertex, the cardinality of $N^+(\{v\})$ (resp. $N^-(\{v\})$), denoted by $d^+(v)$ (resp. $d^-(v)$) is called the
\emph{outdegree} (resp. \emph{indegree}) of $v$ in $D$. The \emph{degree} of $v$ is $d(v)\coloneqq d^+(v)+d^-(v)$.

More generally, for a vertex $v\in V(D)$ and a subdigraph $E$ of $D$, we will denote the cardinality of $N^+(\{v\})\cap V(E)$ by $d^+_E(v)$. Similarly, the cardinality of $N^-(\{v\})\cap V(E)$ will be denoted by $d^-_E(v)$. We set $d_E(v)\coloneqq d^+_E(v)+d^-_E(v)$.
We will denote by $E^c$ the subdigraph of $D$ spanned by the vertices $V(D)\setminus V(E)$. Consequently, $d^+_{E^c}(v)=|N^+(\{v\})\cap V(D)\setminus V(E)|$ and $d^-_{E^c}(v)=|N^-(\{v\})\cap V(D)\setminus V(E)|$.

A directed cycle (resp. directed path) on vertices $v_1,\dots,v_m$ in $D$ is denoted by $[v_1,\ldots,v_m]$ (resp. $(v_1,\dots,v_m)$). We will refer to them as simply \emph{cycles} and \emph{paths} (skipping the term ``directed''), since their non-directed counterparts are not considered in this article at all.
A cycle passing through all the vertices of $D$ is called \emph{hamiltonian}, or a \emph{Hamilton cycle}. A digraph containing a hamiltonian cycle is called a \emph{hamiltonian digraph}. A digraph containing cycles of all lengths is called \emph{pancyclic}.

A digraph $D$ is \emph{strongly connected} when, for every pair of vertices $u,v\in V(D)$, $D$ contains a path originating in $u$ and terminating in $v$ and a path originating in $v$ and terminating in $u$. A digraph $D$ in which, for every pair of vertices $u,v\in V(D)$ precisely one of the arcs $uv, vu$ belongs to $A(D)$ is called a \emph{tournament}.

A digraph $D$ is \emph{bipartite} when $V(D)$ is a disjoint union of independent sets $V_1$ and $V_2$ (the \emph{partite sets}).
It is called \emph{balanced} if $|V_1|=|V_2|$. One says that a bipartite digraph $D$ is \emph{complete} when $d(x)=2|V_2|$ for all $x\in V_1$. A complete bipartite digraph with partite sets of cardinalitites $a$ and $b$ will be denoted by $K^*_{a,b}$\,. A balanced bipartite digraph containing cycles of all even lengths is called \emph{bipancyclic}.
A \emph{matching} from $V_1$ to $V_2$ is an independent set of arcs with origin in $V_1$ and terminus in $V_2$ ($u_1u_2$ and $v_1v_2$ are \emph{independent} arcs when $u_1\neq v_1$ and $u_2\neq v_2$). If $D$ is balanced, one says that such a matching is \emph{perfect} if it consists of precisely $|V_1|$ arcs.

Finally, to streamline the proofs of Theorems~\ref{thm:bipart} and~\ref{thm:hamil}, we will use the following shorthand terminology (borrowed from~\cite{WW}).

\begin{definition}
\label{def:A-Bk-C}
Let $D$ be a balanced bipartite digraph of order $2a$.
For an integer $k\geq0$, we say that $D$ satisfies \emph{condition $\Bk$}, when every dominating pair $\{u,v\}$ satisfies
\[
d(u)\geq2a-k\ \mathrm{and}\ d(v)\geq a+k,\quad\mathrm{or}\quad d(u)\geq a+k\ \mathrm{and}\ d(v)\geq2a-k.
\]
Also, for $k\geq0$, we say that $D$ satisfies \emph{condition $(\mathcal{D}_k)$}, when every dominating pair $\{u,v\}$ satisfies
\[
d(u)+d(v)\geq 3a+k\,.
\]
\end{definition}

%%%%%%%%%%%%%%%%%%%%%%%%%%%%%%%%%%%%%%%%%%%%%%%%%%
\medskip
%%%%%%%%%%%%%%%%%%%%%%%%%%%%%%%%%%%%%%%%%%%%%%%%%%

\section{Proof of Theorem~\ref{thm:hamil}}
\label{sec:hamil-proof}

Throughout this section we assume that $D$ is a strongly connected balanced bipartite digraph with partite sets of cardinalities $a\geq2$, which satisfies condition $\C$.
The proof of Theorem~\ref{thm:hamil} is based on the following four simple lemmas.

\begin{lemma}
\label{lem:1}
Suppose that $D$ is non-hamiltonian. Then, for every vertex $u\in V(D)$ there exists a vertex $v\in V(D)\setminus\{u\}$ such that $\{u,v\}$ is a dominating pair.
\end{lemma}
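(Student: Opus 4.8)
The plan is to argue by contradiction, combining the non-hamiltonicity of $D$ with condition $(\mathcal{D}_1)$ to force a perfect matching (in both directions) that can then be patched into a Hamilton cycle. Suppose there is a vertex $u \in V(D)$ that forms a dominating pair with no other vertex. Write $u \in V_1$, say. Then no two distinct out-neighbours of $u$ can have a common in-neighbour other than through $u$ itself; more usefully, for every vertex $v \neq u$ in $V_1$, the pair $\{u,v\}$ is \emph{not} dominating, which means $N^+(u) \cap N^+(v) = \emptyset$. Since $N^+(u), N^+(v) \subseteq V_2$ and $|V_2| = a$, this gives $d^+(u) + d^+(v) \leq a$ for every $v \in V_1 \setminus \{u\}$.

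First I would extract degree information from this. Summing or taking extremes, either $d^+(u)$ is large and all other vertices of $V_1$ have small outdegree, or $d^+(u)$ is itself small. I would then feed this back into condition $(\mathcal{D}_1)$: for any dominating pair $\{x,y\}$ we have $d(x) + d(y) \geq 3a+1$. The key step is to show that the "no dominating pair through $u$" hypothesis makes $D$ so dense elsewhere (large in/outdegrees for most vertices) that $D$ must contain a Hamilton cycle, contradicting non-hamiltonicity. Concretely, I expect that $N^+(u) \cap N^+(v) = \emptyset$ for all $v$ forces $d^+(u) + d^-(u)$ to interact with the degrees of the vertices in $N^+(u)$ and $N^-(u)$ via condition $(\mathcal{D}_1)$ — note that any two in-neighbours of a common vertex form a dominating pair, so all pairs of vertices in $V_1$ that share an out-neighbour (equivalently, essentially all pairs, once $u$ is removed) are dominating — and hence one gets very strong lower bounds on $\sum_{v \in V_1} d(v)$, well beyond what a non-hamiltonian balanced bipartite digraph can support (e.g.\ contradicting a Moon–Moser / Woodall-type threshold, or directly contradicting Theorem~\ref{thm:AAY} or Theorem~\ref{thm:A1} applied to $D$).

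The cleanest route is probably: show that $\{u,v\}$ non-dominating for all $v \in V_1 \setminus\{u\}$ forces $d^+(v) \leq a - d^+(u)$ for all such $v$, and symmetrically handle the dominated side if needed, then observe that \emph{every} pair $\{v,w\} \subseteq V_1\setminus\{u\}$ with $d^+(v), d^+(w) \geq 1$ that shares an out-neighbour is a dominating pair satisfying $d(v)+d(w)\geq 3a+1$; a short counting argument over $V_2$ then shows the out-neighbourhoods cannot all be pairwise disjoint unless $a$ is tiny, so in fact $V_1\setminus\{u\}$ contains a dominating pair with both degrees controlled, and the resulting inequality $3a+1 \leq d(v)+d(w) \leq (a - d^+(u)) + d^-(v) + (a-d^+(u)) + d^-(w)$ combined with $d^-(v), d^-(w) \leq a$ yields $d^+(u) = 0$ or $1$, so $u$ has essentially no out-neighbours — but then strong connectivity already fails unless $d^+(u)\geq 1$, and with $d^+(u)=1$ one checks directly that the single out-arc of $u$ together with the near-complete structure of $D - u$ gives a Hamilton cycle.

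\textbf{Main obstacle.} The delicate point is the bookkeeping between the out-side and in-side conditions: "$\{u,v\}$ is not a dominating pair" only constrains common \emph{out}-neighbours, so on its own it bounds outdegrees but says nothing about indegrees, and condition $(\mathcal{D}_1)$ mixes the two. The hard part will be choosing the right auxiliary dominating pair (or a short path/cycle structure) to convert the outdegree deficiency of $u$ into a genuine contradiction with non-hamiltonicity, rather than merely an inequality that is satisfiable; I expect this is where one invokes strong connectivity together with one of the earlier hamiltonicity theorems, or a direct cycle-extension argument, to close the gap.
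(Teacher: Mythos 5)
There is a genuine gap, and it sits exactly where your plan tries to close the argument by counting. Your ``cleanest route'' does not deliver what you claim: from $3a+1\leq d(v)+d(w)\leq 2\bigl(a-d^+(u)\bigr)+d^-(v)+d^-(w)\leq 4a-2d^+(u)$ you only get $d^+(u)\leq\frac{a-1}{2}$, not $d^+(u)\in\{0,1\}$, so the intended collapse to ``$u$ has essentially no out-neighbours'' does not follow. Worse, the preliminary step is also unavailable: it is simply false that the out-neighbourhoods of the vertices of $V_1\setminus\{u\}$ ``cannot all be pairwise disjoint unless $a$ is tiny.'' They are pairwise disjoint, for every $a$, when $D$ is (or is close to) a directed $2a$-cycle, and this sparse configuration is precisely the heart of the lemma: a digraph in which some vertex has no dominating partner can be extremely sparse, satisfies $(\mathcal{D}_1)$ vacuously, and supports no numerical contradiction of the kind you are after. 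For the same reason you cannot invoke Theorem~\ref{thm:AAY} or Theorem~\ref{thm:A1}: their hypotheses (degree sums over all non-adjacent pairs, resp.\ over dominated pairs as well) are not granted by $(\mathcal{D}_1)$, and the closing claim that $D-u$ is ``near-complete'' is unsupported --- in the critical case it is a path.

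The missing idea is propagation rather than counting. In the paper's argument one takes the hypothetical vertex $u_0$ with no dominating partner and a path $(u_0,u_1,\dots,u_s)$ to an arbitrary vertex $v$ (strong connectivity). Since $u_0$ shares no out-neighbour with anyone, $d^-(u_1)=1$, hence $d(u_1)\leq a+1$; if $u_1$ had a common out-neighbour with some $w$, condition $(\mathcal{D}_1)$ would force $d(w)=2a$, so $w$ would be dominated by every vertex of the opposite partite set, including $u_0$ and a second vertex --- contradicting the choice of $u_0$. So $u_1$ inherits the ``no dominating partner'' property, and iterating along the path, so does $v$. As $v$ was arbitrary, every vertex has in-degree $1$, and strong connectivity then forces $D$ to be the directed $2a$-cycle, which is hamiltonian --- contradicting non-hamiltonicity. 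Your proposal never rules out (and cannot rule out by density estimates alone) this extremal cycle case, which is why the approach as written fails.
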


\begin{proof}
For a proof by contradiction, suppose that $D$ contains a vertex $u_0$ which has no common out-neighbour with any other vertex in $D$. We claim that then no vertex of $D$ has a common out-neighbour with any other vertex. Indeed, let $v\in V(D)\setminus\{u_0\}$ be arbitrary. By strong connectedness of $D$, there is a path $P=(u_0,u_1,\ldots,u_s)$, with $u_s=v$. By assumptions on $u_0$, we have $d^-(u_1)=1$ and hence $d(u_1)\leq a+1$. If then $u_1$ had a common out-neighbour with some vertex $w\in V(D)$, we would have $d(w)\geq2a$, by condition $\C$. In particular, $w$ would be dominated by all the vertices from the opposite partite set, and so $u_0$ would have $w$ as a common out-neighbour with all vertices from its partite set; a contradiction. It thus follows that $u_1$ has no common out-neighbour with any other vertex in $D$. Repeating the above argument for all the subsequent vertices on $P$, we obtain in the end that $v$ has no common out-neighbour with any other vertex in $D$. This proves our claim, since $v$ was arbitary.

The strong connectedness now implies that $D$ is, in fact, a cycle of length $2a$. This contradicts the assumptions of the lemma.
\end{proof}

\begin{lemma}
\label{lem:2}
If $D$ is non-hamiltonian, then $d(u)\geq a+1$ for every vertex $u$ in $D$.
\end{lemma}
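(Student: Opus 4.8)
The plan is to derive Lemma~\ref{lem:2} immediately from Lemma~\ref{lem:1}, condition~$\C$, and the trivial degree bound available in any balanced bipartite digraph. First I would fix an arbitrary vertex $u\in V(D)$. Assuming $D$ is non-hamiltonian, Lemma~\ref{lem:1} supplies a vertex $v\in V(D)\setminus\{u\}$ for which $\{u,v\}$ is a dominating pair, so that condition~$\C$ applies to the pair $\{u,v\}$ and yields $d(u)+d(v)\geq3a+1$.

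The second ingredient is the upper bound $d(v)\leq2a$: since $D$ is balanced bipartite with partite sets of cardinality $a$, the vertex $v$ has at most $a$ out-neighbours and at most $a$ in-neighbours, all lying in the partite set opposite to $v$. Substituting this into the previous inequality gives $d(u)\geq3a+1-d(v)\geq3a+1-2a=a+1$. Since $u$ was chosen arbitrarily, the conclusion follows for every vertex of $D$.

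I do not anticipate any genuine obstacle; the argument is a one-line deduction. The only point requiring a moment's care is the justification of $d(v)\leq2a$, which is nothing more than the definition of a balanced bipartite digraph together with the absence of multiple arcs.
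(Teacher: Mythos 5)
Your proposal is correct and is exactly the argument the paper intends: the paper's proof simply cites Lemma~\ref{lem:1}, condition $\C$, and the bound $d(v)\leq 2a$, which is precisely the one-line deduction you spell out.
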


\begin{proof}
This follows immediately from Lemma~\ref{lem:1}, condition $\C$, and the fact that the degree of every vertex in $D$ is bounded above by $2a$.
\end{proof}

\begin{lemma}
\label{lem:3}
$D$ contains a cycle factor.
\end{lemma}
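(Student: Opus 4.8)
The plan is to reduce the problem to a perfect‑matching question via Hall's theorem. Since $D$ is balanced bipartite with partite sets $V_1,V_2$ of size $a$ and every cycle of $D$ alternates between $V_1$ and $V_2$, a cycle factor of $D$ is precisely the union of a perfect matching contained in $A[V_1,V_2]$ and a perfect matching contained in $A[V_2,V_1]$: the union of two such matchings is a spanning subdigraph in which every vertex has in‑degree and out‑degree $1$, i.e.\ a disjoint union of cycles covering $V(D)$. So, if $D$ had no cycle factor, then one of the two associated bipartite graphs would violate Hall's condition, and after possibly swapping the labels of $V_1$ and $V_2$ (which preserves strong connectedness, balancedness, and condition $\C$) there would be a nonempty set $X\subseteq V_1$ with $|N^+(X)|<|X|$. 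We may also assume $D$ is non‑hamiltonian, since otherwise a Hamilton cycle is already a cycle factor; then Lemma~\ref{lem:2} gives $d(w)\geq a+1$ for every vertex $w$.

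Put $x=|X|$ and $T=N^+(X)\subseteq V_2$, so $|T|\leq x-1$ and every out‑neighbour of every vertex of $X$ lies in $T$. Strong connectedness forces every vertex of $X$ to have positive out‑degree, so $T\neq\emptyset$ and $x\geq 2$; it also forces every vertex of $V_2$ to have an in‑neighbour in $V_1$, so $N^+(V_1)=V_2$, hence $X\neq V_1$, i.e.\ $x\leq a-1$ (in particular $a\geq3$) and $|T|\leq a-2$. First I would run a pigeonhole argument inside $X$: each vertex of $X$ has at least one out‑neighbour in $T$, so $|A[X,T]|\geq x>|T|$, and therefore some vertex of $T$ has two distinct in‑neighbours $u_1,u_2\in X$; thus $\{u_1,u_2\}$ is a dominating pair. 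Since $d^+(u_i)\leq|T|$ and $d^-(u_i)\leq a$, condition $\C$ yields $2|T|+2a\geq d(u_1)+d(u_2)\geq 3a+1$, so $|T|\geq\tfrac{a+1}{2}$ and hence $x\geq\tfrac{a+3}{2}>\tfrac{a}{2}$.

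Applying the same idea to the complementary set $W:=V_2\setminus T$ then yields a contradiction. Here $|W|=a-|T|\geq a-x+1\geq 2$, and no vertex of $W$ has an in‑neighbour in $X$, so $d^-(w)\leq a-x$ and hence $d^+(w)\geq(a+1)-(a-x)=x+1$ for every $w\in W$. Consequently $|A[W,V_1]|=\sum_{w\in W}d^+(w)\geq 2(x+1)\geq a+5>a=|V_1|$, so some vertex of $V_1$ has two distinct in‑neighbours $w_1,w_2\in W$, making $\{w_1,w_2\}$ a dominating pair. Now $d^-(w_i)\leq a-x$ and $d^+(w_i)\leq a$, so condition $\C$ gives $2a+2(a-x)\geq d(w_1)+d(w_2)\geq 3a+1$, i.e.\ $x\leq\tfrac{a-1}{2}<\tfrac{a}{2}$ — contradicting the bound $x>\tfrac{a}{2}$ from the previous paragraph. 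Hence $D$ has a cycle factor.

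The crux — and the reason the argument needs both $X$ and its complement $W$ — is that condition $\C$ only constrains dominating pairs, so one cannot bound the degrees of arbitrary vertices directly; instead one must manufacture dominating pairs by pigeonhole. A single such pair (inside $X$) only bounds $|X|$ from below, while a pair inside the opposite‑partite‑set set $W$, where the roles of in‑ and out‑degree are reversed, bounds $|X|$ from above; the real work is checking that $W$ is large enough, and its vertices have large enough out‑degree, for the second pigeonhole step to go through.
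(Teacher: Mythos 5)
Your proof is correct, and its first half coincides with the paper's: both reduce the cycle factor to a pair of perfect matchings via K\"onig--Hall, take a Hall-violating set ($S$ in the paper, $X$ for you), and use a pigeonhole argument to produce a dominating pair inside it, which together with condition $(\mathcal{D}_1)$ forces $|X|\geq\frac{a+3}{2}$. The two arguments part ways in the endgame, on the complementary set $W=Y\setminus N^+(S)$. The paper first shows that no two vertices of $W$ can form a dominating pair (via the same inequality $3a+1\le 2(2a-|S|)$ that you invoke at the very end) and then exploits this negatively: the out-neighbourhoods of the vertices of $W$ are pairwise disjoint and nonempty, so $d^+(y)\le|S|$ and hence $d(y)\le a$, contradicting Lemma~\ref{lem:2}. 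You instead argue positively: Lemma~\ref{lem:2} together with $d^-(w)\le a-|X|$ forces every $w\in W$ to have out-degree at least $|X|+1$, so a second pigeonhole count of the arcs from $W$ into the opposite partite set manufactures a dominating pair inside $W$, and condition $(\mathcal{D}_1)$ then yields $|X|\le\frac{a-1}{2}$, contradicting the earlier lower bound. The two finishes are dual and of comparable length; yours dispenses with the disjoint-out-neighbourhood step and keeps the final contradiction purely between upper and lower bounds on $|X|$, whereas the paper's lands the contradiction on the degree bound of Lemma~\ref{lem:2}. All your intermediate estimates ($x\le a-1$, $|T|\ge\frac{a+1}{2}$, $|W|\ge2$, $|A[W,V_1]|\ge 2(x+1)>a$) check out, so the argument is complete.
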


\begin{proof}
Suppose that $D$ is non-hamiltonian.
Let $X$ and $Y$ denote the two partite sets of $D$. Observe that $D$ contains a cycle factor if and only if there exist both a perfect matching from $X$ to $Y$ and a perfect matching from $Y$ to $X$. Therefore, by the K{\"o}nig-Hall theorem (see, e.g., \cite{B}), it suffices to show that $|N^+(S)|\geq|S|$ for every $S\subset X$ and $|N^+(T)|\geq|T|$ for every $T\subset Y$.

For a proof by contradiction, suppose that a non-empty set $S\subset X$ is such that $|N^+(S)|<|S|$. Then $|S|\geq2$, for else the sole vertex $x_0$ of $S$ would satisfy $d^+(x_0)=0$, which is not possible in a strongly connected digraph. Since $|N^+(S)|<|S|$, there exist vertices $x_1,x_2\in S$ with a common out-neighbour. By condition $\C$, we get
\[
3a+1\leq d(x_1)+d(x_2)=(d^+(x_1)+d^+(x_2))+(d^-(x_1)+d^-(x_2))\leq 2(|S|-1)+2a\,,
\]
and hence $2|S|\geq a+3$.

Now, for every $y\in Y\setminus N^+(S)$, we have $d(y)=d^+(y)+d^-(y)\leq a+(a-|S|)$. It follows that $|S|\leq a-1$, for else we would have $d(y)\leq a$, contrary to Lemma~\ref{lem:2}. Consequently, $|Y\setminus N^+(S)|\geq2$.
Moreover, no two vertices of $Y\setminus N^+(S)$ form a dominating pair. Indeed, for if $y_1,y_2\in Y\setminus N^+(S)$ were such a pair, we would have
\[
3a+1\leq d(y_1)+d(y_2)\leq 2(2a-|S|)\leq 4a-(a+3)\,,
\]
a contradiction. Thus, in fact, for every $y\in Y\setminus N^+(S)$, we have
\[
d^+(y)\leq a-(|Y\setminus N^+(S)|-1)=a-(a-|N^+(S)|-1)\leq|S|\,.
\]
Consequenly, for every such $y$,
\[
d(y)=d^+(y)+d^-(y)\leq|S|+(a-|S|)=a\,,
\]
which again contradicts Lemma~\ref{lem:2}.

This completes the proof of existence of a perfect matching from $X$ to $Y$. The proof for a matching in the opposite direction is analogous.
\end{proof}

We shall also need the following result from \cite{AAY}. Note that, by Lemma~\ref{lem:3}, $D$ contains a cycle factor.

\begin{lemma}[{\cite{AAY}}]
\label{lem:4}
Suppose that $D$ is non-hamiltonian, and let $\{C_1,\dots,C_l\}$ be a cycle factor in $D$ with a minimal number of elements.
Then,
\[
\arcs(V(C_1), V(D)\setminus V(C_1))\ \leq\ \frac{|C_1|(2a-|C_1|)}{2}\,.
\]
\end{lemma}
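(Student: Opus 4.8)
The plan is to exploit the minimality of the cycle factor through a standard ``splicing'' argument, which I will package as a bijection between the potential arcs leaving $C_1$ and those entering $C_1$. Write $m=|C_1|$ and $R=V(D)\setminus V(C_1)$, list $C_1=[c_1,\dots,c_m]$ with indices read modulo $m$, and for $p\in R$ let $p^-$ denote the predecessor of $p$ on whichever cycle of the factor contains it (so $p^-\in R$ as well, and $p^-\ne p$). The key consequence of minimality is a forbidden configuration: for every arc $c_ic_{i+1}$ of $C_1$ and every $p\in R$, the digraph $D$ cannot contain both $c_ip$ and $p^-c_{i+1}$. Indeed, were both present, deleting the arcs $c_ic_{i+1}$ and $p^-p$ and inserting $c_ip$ and $p^-c_{i+1}$ would splice $C_1$ together with the cycle $C_j\ni p$ into a single cycle on $V(C_1)\cup V(C_j)$, producing a cycle factor with fewer elements and contradicting minimality.

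Next I would read this configuration as a pairing of ``arc slots.'' Call an ordered pair $(c,p)\in V(C_1)\times R$ with $c,p$ in opposite partite sets an \emph{out-slot}, and an ordered pair $(q,c)\in R\times V(C_1)$ with $q,c$ in opposite partite sets an \emph{in-slot}; these are exactly the positions at which an arc from $C_1$ to $R$, respectively from $R$ to $C_1$, could occur. I would then set $\phi(c_i,p)=(p^-,c_{i+1})$. Since $p^-$ is opposite to $p$ and $c_{i+1}$ is opposite to $c_i$ while $c_i,p$ are themselves opposite, the image $(p^-,c_{i+1})$ has its entries in opposite partite sets and is a genuine in-slot; moreover $\phi$ is a bijection, with inverse $(q,c)\mapsto(c^-,q^+)$ (predecessor on $C_1$, successor on the cycle containing $q$). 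The forbidden configuration says precisely that for each out-slot $s$ at most one of $s$ and $\phi(s)$ is an arc of $D$.

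Finally I would count. By definition $\arcs(V(C_1),R)$ is the number of present out-slots plus the number of present in-slots, so pairing each out-slot $s$ with its image $\phi(s)$ gives
\[
\arcs(V(C_1),R)=\sum_{s\ \text{out-slot}}\bigl(\mathbf{1}[s\in A(D)]+\mathbf{1}[\phi(s)\in A(D)]\bigr)\ \le\ \#\{\text{out-slots}\}\,,
\]
where $\mathbf{1}[\cdot]$ is the indicator. Because $C_1$ is a cycle in a bipartite digraph it meets each partite set in exactly $m/2$ vertices, whence $R$ meets each in $a-m/2$ vertices and the number of out-slots is $2\cdot\frac{m}{2}\bigl(a-\frac{m}{2}\bigr)=\frac{m(2a-m)}{2}$, which is the asserted bound. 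The step demanding the most care --- and the only real obstacle --- is the bookkeeping of $\phi$: checking that it respects the bipartition so that out-slots map bijectively onto in-slots, and that $\{s,\phi(s)\}$ is exactly the pair outlawed by the splice. Bipartiteness is what makes the slot count equal to half of $m(2a-m)$ rather than all of it, so the factor $\tfrac12$ in the statement is the combined effect of the ``at most one of $s,\phi(s)$'' constraint together with this bipartite count.
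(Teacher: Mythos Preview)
Your argument is correct: the splicing observation (that $c_ip$ and $p^-c_{i+1}$ cannot both be arcs, else $C_1$ and $C_j$ merge) is exactly the right mechanism, and packaging it as the bijection $\phi$ between out-slots and in-slots is a clean way to turn ``at most one of each pair'' into the global bound. The bipartite bookkeeping checks out --- $c_i$ and $p$ opposite forces $c_{i+1}$ and $p^-$ opposite --- and the slot count $2\cdot\tfrac{m}{2}(a-\tfrac{m}{2})$ is right.

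As for comparison with the paper: there is nothing to compare against, since the paper does not prove this lemma but simply imports it from \cite{AAY}. Your proof is the standard one and is essentially what one finds in that source (phrased there as a per-vertex inequality $d^+_{C_1^c}(c_i)+d^-_{C_1^c}(c_{i+1})\le a-\tfrac{m}{2}$ summed over $i$, which is the same pairing read one $c_i$ at a time rather than globally via $\phi$).
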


%%%%%%%%%%%%%%%%%%%%%%%%%%%%%%%%%%%%%%%%%%%%%%%%%%
\medskip
%%%%%%%%%%%%%%%%%%%%%%%%%%%%%%%%%%%%%%%%%%%%%%%%%%

\subsection*{Proof of Theorem~\ref{thm:hamil}}
\label{subsec:thm-hamil}

Let $D$ be a balanced bipartite digraph on $2a$ vertices, and let $X$ and $Y$ denote its partite sets. By Lemma~\ref{lem:3}, $D$ contains a cycle factor $\{C_1,\dots,C_l\}$. Assume $l$ is minimum possible, and for a proof by contradiction suppose that $l\geq2$. We may assume that $|C_1|\leq\dots\leq|C_l|$. Set $t\coloneqq|C_1|/2$. Then, 
$1\leq t\leq a/2$, since $l\geq2$ and $|C_1|\leq|C_2|$.
Moreover, by Lemma~\ref{lem:4}, we have
\begin{equation}
\label{eq:C1}
\arcs(V(C_1), V(D)\setminus V(C_1))\ \leq\ 2t(a-t)\,.
\end{equation}
Without loss of generality, we may assume that
\begin{equation}
\label{eq:C1X}
\arcs(V(C_1)\cap X, V(D)\setminus V(C_1))\ \leq\ t(a-t)\,,
\end{equation}
as otherwise
\begin{equation}
\label{eq:C1Y}
\arcs(V(C_1)\cap Y, V(D)\setminus V(C_1))\ \leq\ t(a-t)\,.
\end{equation}

We will first show that $t\geq2$. Suppose otherwise. Then $C_1$ is a 2-cycle consisting of, say, arcs $x_1y_1$ and $y_1x_1$. By \eqref{eq:C1},
\begin{multline}
\notag
d(x_1)+d(y_1)=(d_{C_1}(x_1)+d_{C_1}(y_1))+(d_{C^c_1}(x_1)+d_{C^c_1}(y_1))\\
\leq 4+2(a-1)=2a+2\,,
\end{multline}
which in light of Lemma~\ref{lem:2} implies that $d(x_1)=d(y_1)=a+1$, and $d_{C_1}(x_1)=d_{C_1}(y_1)=2$.
By Lemma~\ref{lem:1}, there exists a vertex $x'\in X\setminus\{x_1\}$ such that $\{x_1,x'\}$ is a dominating pair. Condition $\C$ then implies that $d(x')=2a$. In particular, $x'y_1\in A(D)$. We have $x'\in V(C_j)$ for some $1<j\leq l$. Let $y'$ denote the successor of $x'$ on the cycle $C_j$. Note that $\{y_1,y'\}$ form a dominating pair (as they both dominate $x'$), hence $d(y')=2a$, by condition $\C$ again. In particular, $x_1y'\in A(D)$, and so the cycle $C_1$ can be merged into $C_j$ by replacing the arc $x'y'$ on $C_j$ with the path $(x',y_1,x_1,y')$. This contradicts the minimality of $l$. Thus, indeed, $t\geq2$.

Let now $x_1,\dots,x_t\in X$ and $y_1,\dots,y_t\in Y$ be the vetices of $C_1$, labeled so that
\begin{equation}
\label{eq:C^c_1}
d_{C^c_1}(x_1)\leq\dots\leq d_{C^c_1}(x_t)\qquad\mathrm{and}\qquad d_{C^c_1}(y_1)\leq\dots\leq d_{C^c_1}(y_t)\,.
\end{equation}
Then, by \eqref{eq:C1X}, $d_{C^c_1}(x_1)\leq a-t$. The remainder of the proof splits into two cases depending on whether or not the latter inequality is strict.

\subsubsection*{Case 1.} Suppose first that $d_{C^c_1}(x_1)=a-t$. In this case we have $d_{C^c_1}(x_i)=a-t$ for all $1\leq i\leq t$, by \eqref{eq:C1X}. It follows that no two vertices $x_i,x_j$ in $X\cap V(C_1)$ form a dominating pair. Indeed, otherwise
\begin{multline}
\notag
3a+1\leq d(x_i)+d(x_j)=(d_{C_1}(x_i)+d_{C_1}(x_j))+(d_{C^c_1}(x_i)+d_{C^c_1}(x_j))\\ \leq 4t+2(a-t)=2a+2t\leq 3a\,,
\end{multline}
a contradiction.
Consequently,
\begin{equation}
\label{eq:y12}
d^-_{C_1}(y_j)=1 \quad\mathrm{for\ each\ } 1\leq j\leq t\,.
\end{equation}
In particular, $d^+_{C_1}(x_1)=1$. Now, as $d(x_1)\geq a+1$ (Lemma~\ref{lem:2}) and $d_{C^c_1}(x_1)=a-t$, it follows that $d^-_{C_1}(x_1)\geq t$ and so both $y_1$ and $y_2$ dominate $x_1$. However, by equality in \eqref{eq:C1X}, inequality \eqref{eq:C1} implies that \eqref{eq:C1Y} holds, hence (by \eqref{eq:C^c_1}) $d_{C^c_1}(y_1)+d_{C^c_1}(y_2)\leq2(a-t)$. This, together with \eqref{eq:y12} and condition $\C$, yields
\begin{multline}
\notag
3a+1\leq d(y_1)+d(y_2)=(d_{C_1}(y_1)+d_{C_1}(y_2))+(d_{C^c_1}(y_1)+d_{C^c_1}(y_2))\\ \leq 2(t+1)+2(a-t)=2a+2\,,
\end{multline}
which is impossible, as $a\geq2$.

\subsubsection*{Case 2.} Suppose then that $d_{C^c_1}(x_1)=a-t-\mu_1$ for some $\mu_1>0$. We claim that then $x_1$ forms a dominating pair with at least $\mu_1$ distinct vertices from $C_1$. Indeed, the inequality $d(x_1)\geq a+1$ implies that
\begin{equation}
\label{eq:x1}
d^+_{C_1}(x_1)\geq (a+1)-(a-t-\mu_1)-d^-_{C_1}(x_1)\geq 1+t+\mu_1-t=\mu_1+1\,,
\end{equation}
and so $x_1$ dominates at least $\mu_1$ vertices on $C_1$ apart from its own successor on $C_1$.

Note that, for any $1\leq i<j\leq t$ such that $x_i,x_j$ satisfy $d_{C^c_1}(x_i)+d_{C^c_1}(x_j)\leq2(a-t)$, the pair $\{x_i,x_j\}$ is not dominating. Indeed, for such $x_i$, $x_j$ we have
\begin{multline}
\notag
d(x_i)+d(x_j)=(d_{C_1}(x_i)+d_{C_1}(x_j))+(d_{C^c_1}(x_i)+d_{C^c_1}(x_j))\\ \leq 4t+2(a-t)=2a+2t\leq 3a\,.
\end{multline}
In particular, $\{x_1,x_2\}$ is not a dominating pair, by \eqref{eq:C1X} and \eqref{eq:C^c_1}.

Moreover, for all $x_i,x_j$ as above and for any vertices $x',x''$ such that $\{x_i,x'\}$ and $\{x_j,x''\}$ are dominating pairs (which exist, by Lemma~\ref{lem:1}, although not necessarily distinct), we have by condition $\C$
\begin{multline}
\notag
d(x')+d(x'')\\
\geq 6a+2-[(d^+_{C_1}(x_i)+d^+_{C_1}(x_j))+(d^-_{C_1}(x_i)+d^-_{C_1}(x_j))+(d_{C^c_1}(x_i)+d_{C^c_1}(x_j))]\\
\geq 6a+2-[t+2t+2(a-t)]=4a-t+2\,,
\end{multline}
hence $d(x')\geq (4a-t+2)-2a=2a-t+2$, and so
\begin{equation}
\label{eq:x'}
d_{C^c_1}(x')\geq (2a-t+2)-2t\geq a-t+2\,.
\end{equation}

Now, let $s\geq1$ and $\mu_1\geq\dots\geq\mu_s>0$ be such that $d_{C^c_1}(x_i)=a-t-\mu_i$ for all $1\leq i\leq s$, and $d_{C^c_1}(x_k)\geq a-t$ for $s<k\leq t$.
As in \eqref{eq:x1}, for each $1\leq i\leq s$, $x_i$ dominates at least $\mu_i$ vertices, say, $y^{i1},\dots,y^{i\mu_i}$ on $C_1$ apart from its own successor on $C_1$. Denote by $I_i$ the subset of $\{1,\dots,t\}$ of indices of the \emph{predecessors} on $C_1$ of those $y^{i1},\dots,y^{i\mu_i}$. Since no two $x_i,x_j$ form a dominating pair, we have
\[
(\{i\}\cup I_i)\cap(\{j\}\cup I_j)=\varnothing\quad\mathrm{for\ all\ } 1\leq i<j\leq s\,.
\]
Let $I:=\{1,\dots,t\}\setminus\bigcup_{i=1}^s(\{i\}\cup I_i)$. Then, by \eqref{eq:x'},
\begin{multline}
\notag
\sum_{i=1}^t d_{C^c_1}(x_i)=\sum_{i=1}^s\left(d_{C^c_1}(x_i)+\sum_{j\in I_i} d_{C^c_1}(x_j)\right)+\sum_{k\in I} d_{C^c_1}(x_k)\\
\geq \sum_{i=1}^s\left((a-t-\mu_i)+\mu_i\!\cdot\!(a-t+2)\right)+(t-\sum_{i=1}^s(1+\mu_i))\!\cdot\!(a-t)\\
= \sum_{i=1}^s\mu_i+\sum_{i=1}^t(a-t)> t(a-t)\,,
\end{multline}
which contradicts inequality \eqref{eq:C1X}. This completes the proof of the theorem.
\qed

%%%%%%%%%%%%%%%%%%%%%%%%%%%%%%%%%%%%%%%%%%%%%%%%%%
\medskip
%%%%%%%%%%%%%%%%%%%%%%%%%%%%%%%%%%%%%%%%%%%%%%%%%%

\section{Proof of Theorem~\ref{thm:bipart}}
\label{sec:bipart-proof}

The proof of Theorem~\ref{thm:bipart} is based on Theorems~\ref{thm:A2} and~\ref{thm:WW}, and the following result of Thomassen.

\begin{theorem}[{\cite[Thm.\,3.5]{T}}]
\label{thm:thomassen}
Let $G$ be a strongly connected digraph of order $n$, $n\geq3$, such that $d(u)+d(v)\geq 2n$ whenever $u$ and $v$ are non-adjacent. Then, $G$ is either pancyclic, or a tournament, or $n$ is even and $G$ is isomorphic to $K^*_{\frac{n}{2},\frac{n}{2}}$.
\end{theorem}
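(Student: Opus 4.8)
The plan is to first secure hamiltonicity and then upgrade it to full pancyclicity by a chord-shortcut descent along a Hamilton cycle, using the fact that the bound $2n$ is exactly one more than Meyniel's hamiltonicity threshold and so provides the slack needed to push the descent through in all but two degenerate configurations. Concretely, since $d(u)+d(v)\ge 2n\ge 2n-1$ for every nonadjacent pair, Meyniel's theorem gives that $G$ is hamiltonian; fix a Hamilton cycle $C=[v_1,\dots,v_n]$, with indices read cyclically. A tournament satisfies the hypothesis vacuously and, being strongly connected, already has cycles of all lengths $3,\dots,n$ by Moon's theorem, while $K^*_{\frac{n}{2},\frac{n}{2}}$ is bipartite and contributes only even lengths; I would therefore assume from now on that $G$ is neither of these and aim to produce a cycle of every length $2\le\ell\le n$. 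In particular, not being a tournament, $G$ contains a digon and hence a $2$-cycle.

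The engine is the elementary observation that a forward chord $v_iv_{i+2}$ of the current cycle yields a cycle one shorter, obtained by deleting $v_{i+1}$; more generally a forward chord $v_iv_{i+j}$ gives a cycle of length $\ell-j+1$ and a backward chord $v_{i+j}v_i$ a cycle of length $j+1$. I would run a descending argument: supposing $G$ has an $\ell$-cycle with $3\le\ell\le n$ but no $(\ell-1)$-cycle, the absence of every skip-chord $v_iv_{i+2}$ forces many pairs of vertices at cyclic distance two to be nonadjacent, so the bound $2n$ applies to each of them. Summing these degree inequalities around $C$ pins the total degree so tightly that the missing arcs are forced into a rigid pattern, and one shows this pattern can only be that of $K^*_{\frac{n}{2},\frac{n}{2}}$ or of a tournament---both excluded by our standing assumption---so the desired $(\ell-1)$-cycle must in fact exist. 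Starting from the Hamilton cycle and descending then yields cycles of all lengths down to $2$.

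The hard part will be the final case analysis at the point where the descent stalls: converting the numerical tightness of the summed degree condition into the precise structural statement that $G$ is complete bipartite or a tournament, rather than merely extracting a contradiction. This demands careful bookkeeping of exactly which arcs among the $v_i$ are forced present or absent, with the presence or absence of digons being what separates the bipartite exception from the tournament exception, and it requires treating the short lengths $\ell=2,3$ directly. I would also have to rule out that an $(\ell-1)$-cycle was overlooked because it routes through an off-cycle vertex via two chords rather than a single skip-chord; handling such composite shortcuts, and confirming that their absence too feeds into the same rigidity argument, is the most delicate part of the proof.
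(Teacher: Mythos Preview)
The paper does not prove this theorem. Theorem~\ref{thm:thomassen} is quoted from Thomassen~\cite{T} and invoked as a black box in Section~\ref{sec:bipart-proof}; there is no argument in the paper to compare your proposal against.

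As for your sketch on its own merits: the overall architecture---Meyniel for hamiltonicity, then a chord-shortcut descent from the Hamilton cycle, with the two listed exceptions emerging as the only configurations in which the descent can stall---is indeed the shape of Thomassen's original argument. Two points deserve tightening. First, the absence of the forward skip-chord $v_iv_{i+2}$ does not by itself make $\{v_i,v_{i+2}\}$ non-adjacent: the backward arc $v_{i+2}v_i$ may be present, and you must either rule that out or exploit the resulting $3$-cycle before you can apply the degree bound to that pair. Second, your claim that a non-tournament satisfying the hypothesis must contain a digon needs one line: if $G$ has no digon then every degree is at most $n-1$, so any non-adjacent pair would have degree sum at most $2n-2<2n$, forcing $G$ to be a tournament after all. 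You are candid that the rigidity analysis at the stall point---turning the tight degree sums into the structural conclusion that $G$ is $K^*_{n/2,n/2}$ or a tournament---is where the real work lies, and your outline does not yet supply it; that is the substantive gap between your plan and a proof.
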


Let $a,k$ be integers such that $a\geq3$ and $\max\{1,\frac{a}{4}\}<k\leq\frac{a}{2}$. Note that then $a+k\leq2a-k$.
Throughout this section, we assume that $D$ is a strongly connected balanced bipartite digraph of order $2a$, which satisfies condition $\Bk$. By Theorem~\ref{thm:WW}, $D$ contains a Hamilton cycle $C$. Assume that $D$ is not equal to $C$. We shall show that then $D$ contains cycles of all even lengths.

\begin{lemma}
\label{lem:5}
For every vertex $u\in V(D)$ there exists a vertex $v\in V(D)\setminus\{u\}$ such that $\{u,v\}$ is a dominating pair.
In particular, $d(u)\geq a+k$.
\end{lemma}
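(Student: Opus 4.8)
The plan is to prove the two assertions of Lemma~\ref{lem:5} in sequence, the first being essentially a miniature analogue of Lemma~\ref{lem:1} adapted to the hamiltonian setting, and the second an immediate consequence of the first together with condition $\Bk$.

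For the first assertion, I would argue by contradiction: suppose some vertex $u_0\in V(D)$ has no common out-neighbour with any other vertex of $D$, i.e.\ $d^-(w)=1$ for every out-neighbour $w$ of $u_0$. The idea is to propagate this property along the Hamilton cycle $C$ (whose existence is guaranteed by Theorem~\ref{thm:WW}), exactly as in the proof of Lemma~\ref{lem:1}. Writing $C=[u_0,u_1,\dots,u_{2a-1}]$, the successor $u_1$ of $u_0$ on $C$ satisfies $d^-(u_1)=1$, hence $d(u_1)\leq a+1$. Since $a+k\leq 2a-k$ and $k>a/4\geq 1$ (as $a\geq 3$... here I must be careful, since $k>a/4$ forces $k\geq 1$ but for the argument I want $a+k>a+1$, i.e.\ $k\geq 2$, which holds once $a\geq 5$; for $a\in\{3,4\}$ the bound $k>a/4$ still only gives $k\geq 1$, so I should instead use the weaker fact that $d(u_1)\le a+1<a+k$ whenever $k\ge 2$, and handle small $a$ separately or observe $a+1<2a-k$ suffices). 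Granting $d(u_1)<\min\{a+k,2a-k\}$, condition $\Bk$ shows $u_1$ cannot belong to any dominating pair: if $\{u_1,w\}$ were dominating then one of $d(u_1),d(w)$ would be at least $a+k$ and the other at least $2a-k$, both exceeding $d(u_1)$. Thus $u_1$ also has no common out-neighbour with any vertex. Iterating along $C$ we conclude that \emph{no} vertex of $D$ lies in a dominating pair; in particular every vertex has in-degree $1$, so by the Hamilton cycle $D=C$, contradicting our standing assumption $D\neq C$. This yields the first assertion.

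Once every vertex $u$ lies in some dominating pair $\{u,v\}$, condition $\Bk$ gives $\min\{d(u),d(v)\}\geq a+k$, so in particular $d(u)\geq a+k$, proving the second assertion. The small-order cases ($a=3,4$, where $k$ could equal $1$ so that $d(u_1)\le a+1$ need not be strictly below $a+k$) should be dispatched with a direct check: when $k=1$ the hypothesis $k>a/4$ forces $a<4$, i.e.\ $a=3$, $k=1$; there $2a-k=5>4\ge a+1=d(u_1)$, so the dominating-pair exclusion still goes through via the $2a-k$ alternative. I would state this one-line remark rather than belabour it.

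The main obstacle I anticipate is purely bookkeeping: making sure the degree inequality used to exclude $u_1$ (and its successors) from dominating pairs is genuinely strict under the constraints $\max\{1,a/4\}<k\le a/2$ and $a\ge 3$, handling the boundary case $k=1$ explicitly. Beyond that, the propagation argument is routine and parallels Lemma~\ref{lem:1} verbatim, with the Hamilton cycle $C$ playing the role that strong connectedness played there; the conclusion ``$D$ is a $2a$-cycle'' is replaced by ``$D=C$,'' which is precisely what we assumed away.
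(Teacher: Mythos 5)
Your argument is correct and is essentially the paper's own proof: propagate the absence of a common out-neighbour along the Hamilton cycle $C$ via the bound $d(u_0^+)\le a+1<\min\{a+k,2a-k\}$, conclude every vertex is dominated only by its predecessor on $C$, hence $D=C$, a contradiction; the bound $d(u)\ge a+k$ then follows from condition $(\mathcal{B}_k)$ together with $a+k\le 2a-k$. The boundary case you worry about is vacuous, since $k$ is an \emph{integer} with $k>\max\{1,a/4\}\ge 1$, so $k\ge 2$ and $a+1<a+k\le 2a-k$ always; note, moreover, that your fallback for the (nonexistent) case $k=1$ would not work as stated, because excluding $u_1$ from every dominating pair requires $d(u_1)<a+k$, the smaller of the two thresholds, and $d(u_1)<2a-k$ alone leaves open the possibility that $u_1$ is the vertex of degree at least $a+k$ in such a pair.
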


\begin{proof}
For a proof by contradiction, suppose that $u_0\in V(D)$ has no common out-neighbour with any other vertex on $D$. Let $u_0^+$ denote the successor of $u_0$ on $C$. Then, $d^-(u_0^+)=1$ and so $d(u_0^+)\leq a+1$. Since $a+1<\min\{a+k,2a-k\}$, it follows that $u_0^+$ has no common out-neighbour with any other vertex, by condition $\Bk$. Repeating this argument for all the subsequent vertices along $C$, one gets that every vertex on $C$ is dominated in $D$ only by its predecessor on $C$. Thus, $D=C$, contrary to our assumptions.
The second claim follows immediately, by condition $\Bk$ and the fact that $a+k\leq 2a-k$.
\end{proof}

\begin{remark}
\label{rem:2-cycle}
Note that every vertex $u\in V(D)$ lies on a 2-cycle. Indeed, by Lemma~\ref{lem:5}, $d^+(u)+d^-(u)>a$ and hence $N^+(u)\cap N^-(u)\neq\varnothing$.
\end{remark}

\begin{lemma}
\label{lem:6}
Suppose $D$ is not bipancyclic. Then:
\begin{itemize}
\item[(a)] For every $u\in V(D)$,
\[
k+1\leq d^-(u)\leq a-1\quad\ \mathrm{and}\ \quad k+1\leq d^+(u)\leq a-1\,.
\]
\item[(b)] If $\{u,v\}$ is a non-dominating pair, with $u,v$ in the same partite set, then
\[
d(u)<2a-k\quad\mathrm{and}\quad d(v)<2a-k\,.
\]
\item[(c)] If $d(u)\geq2a-k$, then for any $v\in V(D)\setminus\{u\}$, 
\[
d^+(u)+d^-(v)\geq a+2\quad\mathrm{and}\quad d^+(v)+d^-(u)\geq a+2\,.
\]
\end{itemize}
\end{lemma}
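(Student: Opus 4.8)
The plan is to deduce all three parts from a single structural fact: under the standing hypotheses of this section, together with the assumption that $D$ is \emph{not} bipancyclic, no vertex of $D$ can have out-degree or in-degree equal to $a$; once this is known, parts (a)--(c) reduce to elementary counting against the degree bound $d(u)\ge a+k$ furnished by Lemma~\ref{lem:5}.

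I would first prove the upper bounds $d^+(u)\le a-1$ and $d^-(u)\le a-1$. Suppose, say, that $u\in X$ has $d^+(u)=a$; since $N^+(u)$ is contained in the opposite partite set, which has $a$ vertices, this means $u$ dominates every vertex of that set. Using the Hamilton cycle guaranteed by Theorem~\ref{thm:WW}, write it as $C=[v_1,\dots,v_{2a}]$ with $v_1=u$, so that the opposite partite set is $\{v_2,v_4,\dots,v_{2a}\}$ and every arc $v_1v_{2m}$ with $1\le m\le a$ is present. For each such $m$ the sequence $v_1,v_{2m},v_{2m+1},\dots,v_{2a},v_1$ is a cycle (its vertices are pairwise distinct since $2m\ge2$) of length $2a-2m+2$, and as $m$ ranges over $1,\dots,a$ these lengths range over $2a,2a-2,\dots,4,2$; hence $D$ would be bipancyclic, a contradiction. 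The case $d^-(u)=a$ is symmetric, the relevant cycles being $v_1,v_2,\dots,v_{2m},v_1$ of length $2m$. The lower bounds in (a) are then immediate from Lemma~\ref{lem:5}: $d^+(u)=d(u)-d^-(u)\ge(a+k)-(a-1)=k+1$, and likewise $d^-(u)\ge k+1$.

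For (b), if $\{u,v\}$ is a non-dominating pair with $u$ and $v$ in the same partite set, then $N^+(u)$ and $N^+(v)$ are disjoint subsets of the opposite partite set, so $d^+(u)+d^+(v)\le a$; if moreover $d(u)\ge2a-k$, then the bound $d^-(u)\le a-1$ from (a) forces $d^+(u)\ge a-k+1$, whence $d^+(v)\le k-1$, contradicting $d^+(v)\ge k+1$ from (a). Thus $d(u)<2a-k$, and symmetrically $d(v)<2a-k$. For (c), the hypothesis $d(u)\ge2a-k$ combined with the upper bounds of (a) gives both $d^+(u)\ge a-k+1$ and $d^-(u)\ge a-k+1$; adding $d^-(v)\ge k+1$ and $d^+(v)\ge k+1$ (part (a) applied to $v$) yields $d^+(u)+d^-(v)\ge a+2$ and $d^+(v)+d^-(u)\ge a+2$.

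The only step that is more than formal manipulation is the bipancyclicity argument in the upper bounds of (a); the care needed there is entirely in checking that the shortcuts through a vertex of full out- (or in-) degree really are simple cycles for every $m$ in the range, including the degenerate endpoints $m=1$ (which just returns $C$) and $m=a$ (which gives a $2$-cycle), and that collectively they realise every even length in $\{2,4,\dots,2a\}$. Everything after that is arithmetic with $a+k\le d(u)$ and $\max\{1,a/4\}<k\le a/2$.
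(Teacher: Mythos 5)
Your proposal is correct and follows essentially the same route as the paper: part (a) comes from the Hamilton cycle (a vertex of full out- or in-degree would yield chord cycles of every even length $2,4,\dots,2a$) together with the bound $d(u)\ge a+k$ of Lemma~\ref{lem:5}, and parts (b) and (c) are the same pigeonhole/degree-sum counting against the partite set size $a$, merely rearranged to quote the bounds from (a) rather than the sum $d(u)+d(v)\ge 3a$ directly. The only difference is that you spell out the chord-cycle argument which the paper compresses into one sentence, and your details are exactly the intended ones.
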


\begin{proof}
Part (a) follows from Lemma~\ref{lem:5} and the fact that $D$ contains a Hamilton cycle.
For the proof of (b), suppose that $d(u)\geq2a-k$. Then, Lemma~\ref{lem:5} implies $d(u)+d(v)\geq(2a-k)+(a+k)=3a$. Hence, by part (a),
\[
d^+(u)+d^+(v)=(d(u)+d(v))-(d^-(u)+d^-(v))\geq3a-2(a-1)=a+2\,,
\]
and so $N^+(u)\cap N^+(v)\neq\varnothing$; a contradiction.

Finally, if $d(u)\geq2a-k$ then, by Lemma~\ref{lem:5}, $d(u)+d(v)\geq3a$ and hence
\[
d^+(u)+d^-(v)=(d(u)+d(v))-(d^-(u)+d^+(v))\geq3a-2(a-1)=a+2\,,
\]
by part (a), again. The other inequality of (c) is proved analogously.
\end{proof}

\subsection{Proof of Theorem~\ref{thm:bipart}}
\label{subsec:thm-bipart}

By Theorem~\ref{thm:A2}, we may assume that $D$ contains a dominated pair $\{u,v\}$ with $d(u)+d(v)<3a$, for else there is nothing to show. Then, by Lemma~\ref{lem:5}, we have $2(a+k)<3a$, hence $k<\frac{a}{2}$ and $a+k<2a-k$.

Let $X=\{x_1,\dots,x_a\}$ and $Y=\{y_1,\dots,y_a\}$ be the two partite sets of $D$, and suppose without loss of generality that $X$ contains a non-dominating pair $\{x',x''\}$, as above. Let $p\geq2$ denote the maximal integer such that $X$ contains vertices $\{x^{(1)},\dots,x^{(p)}\}$ no two of which form a dominating pair. Then, by Lemma~\ref{lem:6}(b), condition $\Bk$ implies that
\begin{equation}
\label{eq:non-p}
d(x)\geq2a-k\,,\quad\mathrm{for\ every\ }x\in X\setminus\{x^{(1)},\dots,x^{(p)}\}\,.
\end{equation}
Further, for any $1\leq i<j\leq p$, we have $N^+(x^{(i)})\cap N^+(x^{(j)})=\varnothing$, 
and hence
\begin{equation}
\label{eq:a-p-k}
a\geq p(k+1)\,,
\end{equation}
by Lemma~\ref{lem:6}(a). Since $k>\frac{a}{4}$, by assumption, it follows that $p\leq3$.
Moreover, when $p=3$, then $a\geq p(k+1)$ and $k>\frac{a}{4}$ imply $a>12$, and hence $k\geq4$. To sum up, we have
\begin{equation}
\label{eq:est}
p=2,\quad\mathrm{or\ else}\quad p=3\ \,\mathrm{and}\ \,k\geq4\,.
\end{equation}

For the remainder of this proof we shall assume that $k\geq3$. The case $k=2$ requires a different argument and it will be settled separately in Section~\ref{subsec:special-cases}.
We will reduce the proof to a straightforward application of Thomassen's Theorem~\ref{thm:thomassen}, by proving the following claim.

\subsubsection*{Claim 1.} $D$ is bipancyclic, else it contains a Hamilton cycle $C=[y_1,x_1,\dots,y_a,x_a]$ such that
\[
d^+(x_i)+d^-(y_i)\geq a+2\,,\qquad\mathrm{for\ every\ }1\leq i\leq a\,.
\]
For the proof of the claim, suppose first that $k=3$. Then, $p=2$, by~\eqref{eq:est}. Let $x',x''$ be the only two vertices in $X$ with degrees strictly less than $2a-k$.
By Lemma~\ref{lem:6}(c), it suffices to show that $D$ contains a Hamilton cycle $C$ such that $d^+(x')+d^-(x'^-)\geq a+2$ and $d^+(x'')+d^-(x''^-)\geq a+2$, where $x'^-$ (resp. $x''^-$) denotes the predecessor of $x'$ (resp. $x''$) on $C$.

Let then $C=[y_1,x_1,\dots,y_a,x_a]$ be a fixed Hamilton cycle in $D$, and suppose that the predecessor $x'^-$ of $x'$ has $d(x'^-)<2a-k$. Assume without loss of generality that $x'=x_1$ (and so $x'^-=y_1$).
To simplify notation, set
\[
\alpha\coloneqq d^+(x_1)\quad\mathrm{and}\quad \beta\coloneqq d^-(y_1)\,.
\]
We may assume that $\beta\leq a-3$, for if $\beta\geq a-2$, then Lemma~\ref{lem:6}(a) implies
\[
\alpha+\beta\geq(k+1)+(a-2)= a+2\,.
\]
Next, let
\[
l\coloneqq\min\{n\geq2:y_1x_n\in A(D), x_n\neq x''\}\,.
\]
By Lemma~\ref{lem:5}, we have $d^+(y_1)\geq(a+k)-\beta=a-(\beta-k)$, hence
\begin{equation}
\label{eq:l1}
l\leq\beta-k+3\,,
\end{equation}
where the inequality is strict unless $x''\in\{x_2,\dots,x_{\beta-k+2}\}$ and $y_1x''\in A(D)$.

Now, the pair $\{y_1,y_l\}$ is dominating (both dominate $x_l$), thus $d(y_l)\geq2a-k$ and
\begin{multline}
\label{eq:y_l-1}
|N^+(y_l)\setminus\{x_1,\dots,x_l\}|\geq d(y_l)-d^-(y_l)-l\\
\geq(2a-k)-(a-1)-l=a-k-l+1\,.
\end{multline}
On the other hand,
\begin{equation}
\label{eq:x_1-1}
|N^-(x_1)\setminus\{y_1,\dots,y_l\}|\geq d(x_1)-d^+(x_1)-l\geq(a+k)-\alpha-l\,.
\end{equation}
By~\eqref{eq:l1}, the right sides of inequalities~\eqref{eq:y_l-1} and~\eqref{eq:x_1-1} are positive, else $\alpha+\beta\geq a+3$.
It thus follows from~\eqref{eq:y_l-1} and~\eqref{eq:x_1-1} that there exists $l+1\leq m\leq a$ such that
\begin{equation}
\label{eq:m-1}
y_lx_m\in A(D)\ \mathrm{and}\ \,y_mx_1\in A(D)\quad(\mathrm{hence\ also\ } d(y_m)\geq2a-k)\,,
\end{equation}
unless
\begin{equation}
\label{eq:a-l}
(a-k-l+1)+(a+k-\alpha-l)\leq a-l\,.
\end{equation}
By~\eqref{eq:l1}, the latter inequality implies $\alpha+\beta\geq a+1$. Hence, either $\alpha+\beta\geq a+2$, or $\alpha+\beta=a+1$ and we have equalities in~\eqref{eq:a-l} and~\eqref{eq:l1}, or else there exists $l+1\leq m\leq a$ such that~\eqref{eq:m-1} holds. In the latter case, $D$ contains a Hamilton cycle
\[
C'=[y_1,x_l,\ldots,y_m,x_1,\ldots,y_l,x_m,\ldots,x_a]\,,
\]
where the dotted parts indicate the appropriate pieces of $C$. On this new cycle, the predecessor of $x'$ is of degree at least $2a-k$, and hence, by Lemma~\ref{lem:6}(c), we have decreased by one the number of pairs of vertices not satisfying the condition of Claim~1. In may still be the case that the predecessor of $x''$ on $C'$ has degree less than $2a-k$. If so, we repeat the above construction to replace $C'$ by another Hamilton cycle $C''$, on which the condition from Claim~1 is already satisfied by all pairs.

The equalities in~\eqref{eq:a-l} and~\eqref{eq:l1} can actually only occur when $D$ is bipancyclic, as is shown in Lemma~\ref{lem:nasty} below. We have thus proved Claim~1 in case $k=3$.
\medskip

Suppose than that $p=3$, and hence $k\geq4$. Let $x',x'',x'''$ be the only three vertices in $X$ with degrees strictly less than $2a-k$.
By Lemma~\ref{lem:6}(c) and the first part of the proof, it suffices to show that $D$ contains a Hamilton cycle $C$ such that $d^+(x')+d^-(x'^-)\geq a+2$, where $x'^-$ denotes the predecessor of $x'$ on $C$.

Let then $C=[y_1,x_1,\dots,y_a,x_a]$ be a fixed Hamilton cycle in $D$, and suppose that the predecessor $x'^-$ of $x'$ has $d(x'^-)<2a-k$. Assume without loss of generality that $x'=x_1$ (and so $x'^-=y_1$).
To simplify notation, set
\[
\alpha\coloneqq d^+(x_1)\quad\mathrm{and}\quad \beta\coloneqq d^-(y_1)\,.
\]
We may assume that $\beta\leq a-4$, for if $\beta\geq a-3$, then Lemma~\ref{lem:6}(a) implies
\[
\alpha+\beta\geq(k+1)+(a-3)\geq a+2\,.
\]
Next, let
\[
l\coloneqq\min\{n\geq2:y_1x_n\in A(D), x_n\notin\{x'',x'''\}\}\,.
\]
By Lemma~\ref{lem:5}, we have $d^+(y_1)\geq(a+k)-\beta=a-(\beta-k)$, hence
\begin{equation}
\label{eq:l2}
l\leq\beta-k+4\,,
\end{equation}
where the inequality is strict unless $x'',x'''\in\{x_2,\dots,x_{\beta-k+3}\}$, $y_1x''\in A(D)$, and $y_1x'''\in A(D)$.

As in the first part of the proof, the pair $\{y_1,y_l\}$ being dominating implies $d(y_l)\geq2a-k$, and so the inequality~\eqref{eq:y_l-1} holds. Of course, we have~\eqref{eq:x_1-1} as well.
By~\eqref{eq:l2}, the right sides of~\eqref{eq:y_l-1} and~\eqref{eq:x_1-1} are now positive, else $\alpha+\beta\geq a+4$.
It thus follows from~\eqref{eq:y_l-1} and~\eqref{eq:x_1-1} that there exists $l+1\leq m\leq a$ such that
\begin{equation}
\label{eq:m-2}
y_lx_m\in A(D)\ \mathrm{and}\ \,y_mx_1\in A(D)\quad(\mathrm{hence\ also\ } d(y_m)\geq2a-k)\,,
\end{equation}
unless~\eqref{eq:a-l} holds.
By~\eqref{eq:l2} and since $k\geq4$, the latter inequality implies $\alpha+\beta\geq a+1$. Hence, either $\alpha+\beta\geq a+2$, or $\alpha+\beta=a+1$ and we have equalities in~\eqref{eq:a-l} and~\eqref{eq:l2}, or else there exists $l+1\leq m\leq a$ such that~\eqref{eq:m-2} holds. In the latter case, as in the first part of the proof, $D$ contains a Hamilton cycle $C'$, on which the predecessor of $x'$ is of degree at least $2a-k$. Hence, we have reduced to the case $p=2$, which is already settled. On the other hand, the equalities in~\eqref{eq:a-l} and~\eqref{eq:l2} can only occur when $D$ is bipancyclic, by Lemma~\ref{lem:nasty} below, so the proof of Claim~1 is now complete.
\medskip

Suppose now that $D$ is not bipancyclic, and $C=[y_1,x_1,\dots,y_a,x_a]$ is a Hamilton cycle for which the condition of Claim~1 is satisfied.
We associate with $D$ a new digraph, $G$, constructed as follows. Set $V(G)\coloneqq\{v_1,\dots,v_a\}$, and $v_iv_j\in A(G)$ whenever $x_iy_j\in A(D)$, for $i,j\in\{1,\dots,a\}$, $i\neq j$. Then, $G$ is strongly connected because it contains a Hamilton cycle $[v_1,\ldots,v_a]$ (induced from $C$).

Note that $a\geq3$, so $G$ has at least three vertices. Moreover, for every $1\leq i\leq a$, we have
\begin{equation}
\label{eq:D-to-G}
d^+_G(v_i)\geq d^+_D(x_i)-1\quad\mathrm{and}\quad d^-_G(v_i)\geq d^-_{D}(y_i)-1\,.
\end{equation}
Therefore, by Claim~1, $d_G(v_i)\geq a$ for every $1\leq i\leq a$, and thus $G$ satisfies the assumptions of Theorem~\ref{thm:thomassen}.

Notice that every cycle $[v_{i_1},\dots,v_{i_l}]$ of length $l$ in $G$ corresponds to a cycle of length $2l$ in $D$, namely $[y_{i_1},x_{i_1},\dots,y_{i_l},x_{i_l}]$.
Also, by Remark~\ref{rem:2-cycle}, $D$ contains a cycle of length $2$.
To complete the proof it thus suffices to show that $G$ is not a tournament nor a balanced bipartite digraph.

If $G$ were a tournament, then it would contain no cycle of length 2, and hence $d_G(v_i)=d^+_G(v_i)+d^-_G(v_i)\leq a-1$ for every $i$; a contradiction.
On the other hand, if $1\leq i\leq a$ is such that $d_D(x_i)\geq 2a-k$ then $d^+_D(x_i)\geq a-k+1$, by Lemma~\ref{lem:6}(a), and hence $d^+_G(v_i)\geq a-k$, by~\eqref{eq:D-to-G}. Since $k<\frac{a}{2}$, it follows that $v_i$ dominates more than half of the vertices of $G$, and so $G$ is not balanced bipartite.\qed

\subsection{Special cases}
\label{subsec:special-cases}

There remain a few cases of digraphs not covered by the above proof. We do not know of any uniform way of tackling them all at once, and instead proceed on a case by case basis. We begin with a lemma that completes the proof of Theorem~\ref{thm:bipart} in the case of $k\geq3$.

\begin{lemma}
\label{lem:nasty}
Under the above notation, suppose that $p=2$, $k=3$, and we have equalities in~\eqref{eq:a-l} and~\eqref{eq:l1}, or else $p=3$, $k\geq4$, and we have equalities in~\eqref{eq:a-l} and~\eqref{eq:l2}. Then, $D$ is bipancyclic.
\end{lemma}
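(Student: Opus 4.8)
The plan is to analyze the extremal configuration forced by the simultaneous equalities in \eqref{eq:a-l} and \eqref{eq:l1} (resp.\ \eqref{eq:l2}), and show that such a $D$ must contain a cycle through every even length. First I would unpack the equalities: equality in \eqref{eq:l1} forces $x''$ (resp.\ both $x''$ and $x'''$) to occupy the positions $x_2,\dots,x_{\beta-k+2}$ with $y_1$ dominating them, and forces $y_1$ to dominate none of $x_2,\dots,x_{l-1}$ other than those exceptional vertices; equality in \eqref{eq:a-l} means the sets $N^+(y_l)\setminus\{x_1,\dots,x_l\}$ and $N^-(x_1)\setminus\{y_1,\dots,y_l\}$ partition $\{x_{l+1},\dots,x_a\}$ exactly, with no overlap, and that all the degree inequalities used to bound their sizes — namely $d(y_l)\geq 2a-k$, $d^-(y_l)\leq a-1$, $d(x_1)\geq a+k$, $d^+(x_1)=\alpha$ — are tight. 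Combined with $\alpha+\beta=a+1$, this pins $\alpha=k+1$ (since $\alpha\geq k+1$ always, and $\beta\leq a-k$ from equality in \eqref{eq:l1}) and $\beta=a-k$, so $x_1$ has the minimum possible out-degree and $y_1$ has the maximum in-degree consistent with the constraints.

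Next I would exploit this rigidity to locate short cycles directly. The key structural consequences are: $N^-(x_1)=\{y_1,y_2,\dots,y_l\}$ consists of exactly the $l$ predecessors block plus nothing outside it beyond what the partition allows — in fact $N^-(x_1)\cap\{y_{l+1},\dots,y_a\}$ is forced to be a specific set of size $a+k-\alpha-l=a-l$ — so $x_1$ is dominated by almost everything, and symmetrically $y_l$ dominates almost everything past $x_l$. From $d(y_l)\geq 2a-k$ and Lemma~\ref{lem:6}(a), $y_l$ has out-degree $\geq a-k+1$, so by the same rerouting used to build $C'$ we can produce Hamilton cycles with many different "pivot" vertices playing the role of $x_1$, or we can short-circuit: because $y_1$ dominates $x_2$ (an exceptional vertex, by equality in \eqref{eq:l1}), the 2-path $(y_1,x_2)$ together with the arc from $x_1$'s predecessor region gives small cycles. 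More systematically, I would show that the digraph $G$ associated to $D$ as in the main proof — or rather a slightly modified version accounting for the one pair that fails Claim~1 — still satisfies $d_G(v_i)\geq a$ for all but possibly one vertex, and then either patch that vertex by hand or invoke a version of Thomassen's theorem, arguing as before that $G$ is neither a tournament nor $K^*_{a/2,a/2}$, hence pancyclic, whence $D$ is bipancyclic.

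The cleanest route is probably to avoid reconstructing $G$ and instead argue purely combinatorially: show that the extremal $D$ contains, for each $2\leq r\leq a$, a cycle of length $2r$ by a direct surgery on $C$ using the arc $y_lx_m$ and $y_mx_1$ structure — but here $m$ ranges over the whole block $\{x_{l+1},\dots,x_a\}$ because of the exact partition, and varying $m$ varies the length of $C'$, so one gets a range of Hamilton cycles; combined with the observation that on each such cycle $y_1$ still dominates the exceptional $x''$ nearby, one can excise a controlled number of vertices at a time. I would also need the length-$2$ cycle from Remark~\ref{rem:2-cycle}, and to fill intermediate lengths I expect to reuse the "merging" idea from the $t=1$ case of the proof of Theorem~\ref{thm:hamil}.

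The main obstacle will be bookkeeping the exact positions of the exceptional vertices $x''$ (and $x'''$ when $p=3$) relative to the blocks $\{y_1,\dots,y_l\}$ and $\{x_{l+1},\dots,x_a\}$, since these vertices have depressed degree ($<2a-k$) and so do not enjoy the strong domination properties that drive the rerouting; one must check that the surgeries producing cycles of each even length can always be arranged to either avoid disturbing the delicate predecessor of $x''$ or to route through $x''$ in a way that the remaining degree budget still closes. A secondary subtlety is the boundary behaviour when $\beta-k$ is small (so the block $\{x_2,\dots,x_{l-1}\}$ is tiny or empty), where the forced equalities may degenerate; those sub-cases likely need to be peeled off and checked against the explicit extremal families, verifying they are genuinely bipancyclic rather than counterexamples.
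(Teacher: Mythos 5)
Your overall instinct---unpack the forced equalities and then build the missing cycles by direct surgery on $C$---is the right one, and your reading of equality in \eqref{eq:l1}/\eqref{eq:l2} (that $x''$, resp.\ $x'',x'''$, must sit among $x_2,\dots,x_{l-1}$ with $y_1x''\in A(D)$) is correct. But the proposal stops short of the one structural fact that makes the argument close, and this is a genuine gap, not bookkeeping. The equality in \eqref{eq:a-l} forces $d^-(y_l)=a-1$ and forces $x_1$ to be dominated by every $y_i$ with $i\le l$; since any two vertices of degree less than $2a-k$ form a non-dominating pair, the unique vertex of $X$ not dominating $y_l$ must be $x_1$ or $x_r=x''$. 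Hence all chords $x_qy_l$ except exactly one are present, and shortcutting $C$ through them already produces cycles of \emph{every} even length except at most one specific length $2s$ (with $s=a-l+2$ or $s=a-l+r+1$ according to which chord is missing); the lemma then reduces to one explicit construction of that single missing cycle, carried out using $y_1x_r\in A(D)$, the arcs $y_ix_1$ with $i\le l$, and the remaining chords into $y_l$. Nothing in your plan produces ``all even lengths except one''; without that reduction, the ``surgery for each $2\le r\le a$'' is exactly the unbounded case analysis you yourself flag as the main obstacle, i.e.\ the proof is not there.

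In addition, several of your claimed consequences of the equalities are false, and one is backwards. The equalities give $\alpha=a+1-l$ and $l=\beta-k+3$; they do not pin $\alpha=k+1$ and $\beta=a-k$ (any $\beta$ with $k+1\le\beta\le a-3$ is a priori possible). Also $|N^-(x_1)\setminus\{y_1,\dots,y_l\}|=(a+k)-\alpha-l=k-1$, not $a-l$, and $d(x_1)=a+k$ is tight, so $x_1$ is far from ``dominated by almost everything.'' Most importantly, the extremal case is precisely the situation in which \emph{no} index $m$ with $y_lx_m\in A(D)$ and $y_mx_1\in A(D)$ exists---the two neighbourhood sets partition $\{l+1,\dots,a\}$ disjointly---so the proposed ``range of Hamilton cycles $C'$ obtained by varying $m$'' cannot be formed at all. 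Finally, the fallback of applying ``a version of Thomassen's theorem'' to the auxiliary digraph $G$ with one vertex failing the degree bound is not available: Theorem~\ref{thm:thomassen} needs the condition on all non-adjacent pairs, and ``patching that vertex by hand'' is precisely the missing content.
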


\begin{proof}
Let $C=[y_1,x_1,\ldots,y_a,x_a]$ be the fixed Hamilton cycle from the above proof.
The equality in~\eqref{eq:a-l} implies equalities in all the inequalities that led to it. In particular, $x_1$ is dominated by each of the $\{y_1,\dots,y_l\}$, and $d^-(y_l)=a-1$, so either $y_l$ is dominated by all the vertices from $X\setminus\{x_1\}$ or else it is dominated by all the vertices from $X\setminus\{x''\}$. The equality in~\eqref{eq:l1} (when $p=2$) or in~\eqref{eq:l2} (when $p=3$), in turn, implies that $x''=x_r$ for some $1<r<l$, and $y_1x_r\in A(D)$. Also, $l\geq4$.

Suppose first that $x_1y_l\notin A(D)$. Then, $D$ contains cycles of all even lengths (induced from $C$, by chords into $y_l$), except at most for the cycle $C_{2s}$ with $s=a-l+2$. Now, if $s\leq l-1$, then $D$ contains a $2s$-cycle $[x_1,\ldots,y_s]$ (where, as before, the dotted part indicates an appropriate piece of $C$).
If, in turn, $s\geq l$, then either the cycle $[y_1,x_r,\ldots,x_a]$ is of length $2s$, or else it is of length greater than $2s$ and we can shorten it to a $2s$-cycle by replacing an $x_q-y_l$ path on $C$ by the arc $x_qy_l$ with a suitable choice of $r\leq q<l-1$.

Suppose then that $x_ry_l\notin A(D)$. Then, $D$ contains cycles of all even lengths, except at most for the cycle $C_{2s}$ with $s=a-l+r+1$.
Now, if $2r\geq l$, then $D$ contains a $2s$-cycle
\[
[y_1,x_r,\ldots,y_{l-1},x_1,\ldots,x_{2r-l+1},y_l,\ldots,x_a]\,.
\]
If, in turn, $2r\leq l-1$, then $r<l-2$ (as $l\geq4$) and the cycle $[y_1,x_r,\dots,x_a]$ is of length $2(a-r+1)$, where
\[
2(a-r+1)\geq2(a-r+1)+2(2r-l+1)=2(a-l+r+2)\,.
\]
Thus, $[y_1,x_r,\dots,x_a]$ can be shortened to a $2s$-cycle by replacing an $x_q-y_l$ path on $C$ with the arc $x_qy_l$ with a suitable choice of $r<q<l-1$.
\end{proof}
\medskip

It remains to consider the case when $D$ is such that $k=p=2$. Fix a Hamilton cycle $C=[y_1,x_1,\dots,y_a,x_a]$ on $D$. By~\eqref{eq:a-p-k}, we have $a\geq6$. On the other hand, if $a\geq8$, then $k>\frac{a}{4}$ implies $k\geq3$. Thus, $a=6$ or $a=7$. In both cases, for every $x\in X\setminus\{x',x''\}$, we have $d(x)\geq2a-2$. Hence, for every such $x$,
\begin{equation}
\label{eq:a-1}
d^-(x)=d^+(x)=a-1\,,
\end{equation}
for else $D$ contains cycles of all even lengths through $x$ (induced from $C$). It is easy to see that then $D$ contains all cycles of even lengths less than $2(a-1)$. Indeed, let $2\leq s\leq a-2$ and suppose without loss of generality that $d^+(x_1)=d^-(x_1)=a-1$. If $y_{s+1}x_1\in A(D)$ then $D$ contains the $2s$-cycle $[x_1,\dots,y_{s+1}]$. If $x_1y_{a-s+2}\in A(D)$, then $D$ contains the $2s$-cycle $[x_1,y_{a-s+2},\dots,y_1]$. If, in turn, $y_{s+1}x_1\notin A(D)$ and $x_1y_{a-s+2}\notin A(D)$, then $D$ contains all other arcs adjacent to $x_1$, and hence it contains the $2s$-cycle $[x_1,y_{a-s+1},\dots,y_{s+2}]$. It thus remains to show that $D$ contains a subhamiltonian cycle $C_{2(a-1)}$.

Suppose first that $a=6$. Then, we have equality in~\eqref{eq:a-p-k}, and hence
\begin{equation}
\label{eq:a6}
d^+(x')=d^+(x'')=3,\quad\mathrm{and}\quad d^-(x')=d^-(x'')=a-1\,,
\end{equation}
by Lemma~\ref{lem:6}. We may assume that $d(x'^-)<2a-k$ or $d(x''^-)<2a-k$, for else $D$ satisfies the condition from Claim~1 in the proof in Section~\ref{subsec:thm-bipart}, and the remainder of that proof carries through.
Without loss of generality, assume then that $x'=x_1$ (hence $x'^-=y_1$) and $d(y_1)<2a-k$. We may further assume that $y_ax_1\notin A(D)$, for else $D$ contains the $2(a-1)$-cycle $[y_a,x_1,\dots,x_{a-1}]$. Then, by~\eqref{eq:a6}, we must have $y_{a-1}x_1\in A(D)$. Of the 3 vertices $\{x_2,\dots,x_{a-2}\}$ at most one doesn't satisfy \eqref{eq:a-1}. Similarly, at most one of the $\{y_2,\dots,y_{a-2}\}$ has degree less than $2a-2$, by Lemma~\ref{lem:6} and since $p=2$, $d(y_1)<2a-k$. Therefore, there exists $2\leq i\leq a-2$ such that $y_ix_{a-1},y_ax_i\in A(D)$, or $y_ix_a,y_1x_i\in A(D)$, or else $y_ix_{i+1}\in A(D)$. In either of the first two cases, we obtain a cycle of length $2(a-1)$ by replacing the arc $y_ix_i$ on the cycle $[y_{a-1},x_1,\dots,x_{a-2}]$ with an appropriate path of length 3. In the latter case, in turn, $D$ contains the $2(a-1)$-cycle $[x_{i+1},\ldots,y_i]$.

Finally, suppose $a=7$. Then, the inequalities $d(x')\geq a+2$, $d(x'')\geq a+2$, together with~\eqref{eq:a-p-k}, imply that
\begin{equation}
\label{eq:a7}
d^-(x')=a-1,\quad\mathrm{or\ else}\quad d^-(x'')=a-1\,.
\end{equation}
For if $d^-(x')\leq a-2$ and $d^-(x'')\leq a-2$, then $d^+(x')+d^+('')\geq8$ and hence $\{x',x''\}$ is a dominating pair; a contradiction.
Without loss of generality, assume then that $x'=x_1$ and $d^-(x_1)=a-1$. We may further assume that $y_ax_1\notin A(D)$, for else $D$ contains the $2(a-1)$-cycle $[y_a,x_1,\dots,x_{a-1}]$. Then, by~\eqref{eq:a7}, we must have $y_{a-1}x_1\in A(D)$. Of the 4 vertices $\{x_2,\dots,x_{a-2}\}$ at most one doesn't satisfy \eqref{eq:a-1}. Similarly, at most two of the 4 vertices $\{y_2,\dots,y_{a-2}\}$ have degree less than $2a-2$, by Lemma~\ref{lem:6} and since $p=2$. Therefore, there exists $2\leq i\leq a-2$ such that $y_ix_{a-1},y_ax_i\in A(D)$, or $y_ix_a,y_1x_i\in A(D)$, or else $y_ix_{i+1}\in A(D)$. In either of the first two cases, we obtain a cycle of length $2(a-1)$ by replacing the arc $y_ix_i$ on the cycle $[y_{a-1},x_1,\dots,x_{a-2}]$ with an appropriate path of length 3. In the latter case, in turn, $D$ contains the $2(a-1)$-cycle $[x_{i+1},\ldots,y_i]$.\qed

%%%%%%%%%%%%%%%%%%%%%%%%%%%%%%%%%%%%%%%%%%%%%%%%%%
\medskip
%%%%%%%%%%%%%%%%%%%%%%%%%%%%%%%%%%%%%%%%%%%%%%%%%%

\section{Final remarks}
\label{sec:rem}

First of all, for the sake of completeness, let us note that analogues of Theorems~\ref{thm:bipart} and~\ref{thm:hamil} for \emph{dominated} pairs hold true as well. More precisely, we have the following results.

\begin{theorem}
\label{thm:bipart2}
Let $D$ be a strongly connected balanced bipartite digraph of order $2a$, where $a\geq3$, and let $k$ be an integer satisfying $\max\{1,\frac{a}{4}\}<k\leq\frac{a}{2}$.
Suppose that for every dominated pair $\{u,v\}$ of vertices in $D$,
\[
d(u)\geq2a-k\ \mathrm{and}\ d(v)\geq a+k,\quad\mathrm{or}\quad d(u)\geq a+k\ \mathrm{and}\ d(v)\geq2a-k.
\]
Then, $D$ is either bipancyclic or a directed cycle of length $2a$.
\end{theorem}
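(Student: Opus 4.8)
The plan is to reduce Theorem~\ref{thm:bipart2} to Theorem~\ref{thm:bipart} by passing to the \emph{converse digraph}. Recall that the converse $D^{\mathrm{rev}}$ of $D$ is obtained by reversing every arc: $uv\in A(D^{\mathrm{rev}})$ if and only if $vu\in A(D)$. First I would record the elementary observations that $D^{\mathrm{rev}}$ is again a balanced bipartite digraph of order $2a$ with the same partite sets, that $D^{\mathrm{rev}}$ is strongly connected if and only if $D$ is, that $d_{D^{\mathrm{rev}}}(v)=d_D(v)$ for every vertex $v$ (since reversing arcs swaps $d^+$ and $d^-$ but preserves their sum), and that $[v_1,\dots,v_m]$ is a cycle of $D$ if and only if $[v_m,\dots,v_1]$ is a cycle of $D^{\mathrm{rev}}$. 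Consequently $D^{\mathrm{rev}}$ is bipancyclic (resp.\ equal to a directed cycle of length $2a$) if and only if $D$ is.

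The key point is how the dominating/dominated dichotomy interacts with reversal: a pair $\{u,v\}$ is a \emph{dominated} pair in $D$ (there is $w$ with $wu,wv\in A(D)$) precisely when it is a \emph{dominating} pair in $D^{\mathrm{rev}}$ (that same $w$ satisfies $uw,vw\in A(D^{\mathrm{rev}})$), and conversely. Therefore the hypothesis of Theorem~\ref{thm:bipart2} on $D$---namely that every dominated pair $\{u,v\}$ satisfies $d(u)\geq 2a-k$ and $d(v)\geq a+k$, or $d(u)\geq a+k$ and $d(v)\geq 2a-k$---is, in view of the degree-preservation noted above, exactly the statement that every dominating pair of $D^{\mathrm{rev}}$ satisfies the same inequalities; that is, $D^{\mathrm{rev}}$ satisfies condition $\Bk$. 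Since $a\geq 3$ and $\max\{1,\tfrac{a}{4}\}<k\leq\tfrac{a}{2}$ by hypothesis, $D^{\mathrm{rev}}$ satisfies all the hypotheses of Theorem~\ref{thm:bipart}.

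Applying Theorem~\ref{thm:bipart} to $D^{\mathrm{rev}}$ yields that $D^{\mathrm{rev}}$ is either bipancyclic or a directed cycle of length $2a$. Transporting this conclusion back through the cycle-correspondence above, $D$ is either bipancyclic or a directed cycle of length $2a$, as required. There is essentially no obstacle here: the only thing to be careful about is the bookkeeping of which notions are self-dual under reversal (strong connectedness, degree, bipancyclicity, the ``directed cycle of length $2a$'' alternative) and which are interchanged (dominating $\leftrightarrow$ dominated, in-neighbour $\leftrightarrow$ out-neighbour); once that dictionary is in place the proof is immediate. One may also remark that the same converse-digraph argument derives a ``dominated pair'' analogue of Theorem~\ref{thm:hamil} from Theorem~\ref{thm:hamil} itself, since condition $(\mathcal{D}_k)$ is likewise preserved under reversal and the dominating/dominated pairs are swapped.
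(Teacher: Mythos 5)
Your proposal is correct and is exactly the paper's argument: the paper also passes to the converse digraph, notes that degrees are preserved, that dominated pairs of $D$ become dominating pairs of the converse, and that cycles correspond under reversal, and then applies Theorem~\ref{thm:bipart}. Nothing is missing.
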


\begin{theorem}
\label{thm:hamil2}
Let $D$ be a strongly connected balanced bipartite digraph of order $2a$, where $a\geq2$. Suppose that for every dominated pair $\{u,v\}$ of vertices in $D$,
\[
d(u)+d(v)\geq3a+1\,.
\]
Then, $D$ is hamiltonian.
\end{theorem}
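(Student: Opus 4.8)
The plan is to deduce Theorems~\ref{thm:bipart2} and~\ref{thm:hamil2} from Theorems~\ref{thm:bipart} and~\ref{thm:hamil} by a standard duality (arc-reversal) argument, rather than repeating the proofs. Given a balanced bipartite digraph $D$, let $D^{\mathrm{op}}$ denote the \emph{converse} digraph, obtained from $D$ by reversing every arc; that is, $uv\in A(D^{\mathrm{op}})$ if and only if $vu\in A(D)$. The first step is to record the elementary observations that (i) $D^{\mathrm{op}}$ is again a balanced bipartite digraph of order $2a$ with the same partite sets, (ii) $D^{\mathrm{op}}$ is strongly connected if and only if $D$ is, (iii) $d_{D^{\mathrm{op}}}(u)=d_D(u)$ for every vertex $u$ (since in-degrees and out-degrees merely swap), and (iv) a set of vertices forms a \emph{dominating} pair in $D^{\mathrm{op}}$ precisely when it forms a \emph{dominated} pair in $D$, because a common out-neighbour in $D^{\mathrm{op}}$ is a common in-neighbour in $D$.

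With these translations in hand, the argument is immediate. Suppose $D$ satisfies the hypotheses of Theorem~\ref{thm:hamil2}: for every dominated pair $\{u,v\}$ of $D$ we have $d_D(u)+d_D(v)\geq 3a+1$. By (iv) and (iii), this says exactly that for every dominating pair $\{u,v\}$ of $D^{\mathrm{op}}$ we have $d_{D^{\mathrm{op}}}(u)+d_{D^{\mathrm{op}}}(v)\geq 3a+1$, i.e.\ $D^{\mathrm{op}}$ satisfies condition $\C$. Since $D^{\mathrm{op}}$ is strongly connected and balanced bipartite of order $2a$ with $a\geq2$, Theorem~\ref{thm:hamil} applies and yields a Hamilton cycle in $D^{\mathrm{op}}$. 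Reversing that cycle gives a Hamilton cycle in $D$, so $D$ is hamiltonian. The proof of Theorem~\ref{thm:bipart2} from Theorem~\ref{thm:bipart} is identical, with condition $\C$ replaced by condition $\Bk$ (note that condition $\Bk$ is symmetric in $u$ and $v$, so the two displayed alternatives transfer verbatim), and using in addition that $D^{\mathrm{op}}$ is a directed cycle of length $2a$ if and only if $D$ is, and that a cycle of length $2\ell$ in $D^{\mathrm{op}}$ reverses to one of length $2\ell$ in $D$, so $D^{\mathrm{op}}$ bipancyclic forces $D$ bipancyclic.

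There is essentially no obstacle here: the only point requiring a moment's care is the book-keeping in observation~(iv) and the symmetry of conditions $\Bk$ and $\C$ under the swap $u\leftrightarrow v$, which is manifest from Definition~\ref{def:A-Bk-C}. One could alternatively note that every lemma in Sections~\ref{sec:hamil-proof} and~\ref{sec:bipart-proof} has a dominated-pair analogue obtained by the same reversal, but invoking the already-proved Theorems~\ref{thm:bipart} and~\ref{thm:hamil} as black boxes is cleaner and avoids any repetition. Hence Theorems~\ref{thm:bipart2} and~\ref{thm:hamil2} follow with no further work.
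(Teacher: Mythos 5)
Your proposal is correct and is essentially the paper's own argument: the author also passes to the converse digraph $D'$ (reversing all arcs), notes that degrees are preserved and that dominating pairs in $D'$ correspond to dominated pairs in $D$, applies Theorem~\ref{thm:hamil} (resp.\ Theorem~\ref{thm:bipart}), and reverses the resulting cycles. No gaps; nothing further is needed.
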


Indeed, with a given digraph $D$ one can associate a digraph $D'$ on the same vertices, and with arcs $uv\in A(D')$ whenever $vu\in A(D)$. Then, for every $u\in V(D')=V(D)$, we have $d^-_{D'}(u)=d^+_D(u)$ and $d^+_{D'}(u)=d^-_D(u)$, hence $d_{D'}(u)=d_D(u)$. Moreover, a pair of vertices $\{u,v\}$ is dominating in $D'$ if and only if it is dominated in $D$. The above results thus follow immediately from Theorems~\ref{thm:bipart} and~\ref{thm:hamil}, by observing that any cycle $[u_1,\dots,u_s]$ in $D'$ corresponds to a cycle on the same vertices in $D$ traversed in the opposite direction, $[u_s,\dots,u_1]$.
\smallskip

Next, let us have a look at the lower bound on the integer $k$ in Theorems~\ref{thm:WW} and~\ref{thm:bipart}. Of course, the assumption that $k>\frac{a}{4}$ leaves out nearly half of the possible cases in condition $\Bk$. However, a careful analysis of the proof from Section~\ref{subsec:thm-bipart} shows that this argument carries through, for $a$ large enough, so long as $k$ is bounded below by a constant of the form $\lambda a$ for some real $\lambda>0$. Consequently, for all but finitely many exceptional digraphs satifying condition $\Bk$ of this type, hamiltonicity implies bipancyclicity:

\begin{proposition}
\label{prop:what-if}
For every $\lambda>0$ there exists $a_0$ such that the following holds:\\
If $D$ is a strongly connected hamiltonian balanced bipartite digraph of order $2a$, with $a\geq a_0$, $k$ is an integer satisfying $\max\{1,\lambda a\}<k\leq\frac{a}{2}$, and for every dominating pair $\{u,v\}$ of vertices in $D$,
\[
d(u)\geq2a-k\ \mathrm{and}\ d(v)\geq a+k,\quad\mathrm{or}\quad d(u)\geq a+k\ \mathrm{and}\ d(v)\geq2a-k\,,
\]
then $D$ is bipancyclic or else a directed cycle of length $2a$.
\end{proposition}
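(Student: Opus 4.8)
The plan is to follow, essentially verbatim, the structure of the proof of Theorem~\ref{thm:bipart} given in Section~\ref{sec:bipart-proof}, and to track exactly where the hypothesis $k>\frac a4$ is actually used. First I would re-derive the analogues of Lemmas~\ref{lem:5} and~\ref{lem:6}: the proof of Lemma~\ref{lem:5} only needs $a+1<\min\{a+k,2a-k\}$, which holds whenever $2\leq k<a-1$, so it goes through unchanged; Lemma~\ref{lem:6} uses only Lemma~\ref{lem:5}, hamiltonicity, and $k<\frac a2$. So both survive for $k$ of the form $\lambda a$ once $a$ is large. Then, as in Section~\ref{subsec:thm-bipart}, I would invoke Theorem~\ref{thm:A2} to assume the existence of a dominated pair $\{u,v\}$ with $d(u)+d(v)<3a$, forcing $k<\frac a2$ and $a+k<2a-k$, and let $p\geq2$ be the maximal size of a set of pairwise non-dominating vertices in one partite set; inequality~\eqref{eq:a-p-k}, $a\geq p(k+1)$, still holds by the same counting argument.

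The only place $k>\frac a4$ enters is in deducing from~\eqref{eq:a-p-k} that $p\leq3$, and thereafter in the hands-on ``special cases'' $k=2$ and $k=3,p=3$ and $k=2,p=2$ handled in Section~\ref{subsec:special-cases}. Here is the point of the proposition: if instead $\max\{\lambda a,1\}<k\leq\frac a2$, then $a\geq p(k+1)>p\lambda a$ forces $p<1/\lambda$, i.e. $p\leq\lceil 1/\lambda\rceil-1=:p_0$, a bound independent of $a$. So I would let $x^{(1)},\dots,x^{(p)}$ ($p\leq p_0$) be the vertices of $X$ of degree $<2a-k$, define $l:=\min\{n\geq2: y_1x_n\in A(D),\ x_n\notin\{x^{(2)},\dots,x^{(p)}\}\}$, and re-run the chord-swapping argument of Claim~1. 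The estimate~\eqref{eq:l1}/\eqref{eq:l2} becomes $l\leq\beta-k+p+1$, and the ``else'' conclusion $\alpha+\beta\geq a+c$ for a constant $c=c(p_0)$; combined with Lemma~\ref{lem:6}(a) (which gives $\alpha,\beta\geq k+1\geq\lambda a$) one gets $d^+(x')+d^-(x'^-)\geq a+2$ whenever $\beta$ is within a constant of $a$, and the swap $C\rightsquigarrow C'$ otherwise, reducing $p$ by one each time. Iterating $p_0$ times produces a Hamilton cycle $C=[y_1,x_1,\dots,y_a,x_a]$ with $d^+(x_i)+d^-(y_i)\geq a+2$ for all $i$, i.e. the conclusion of Claim~1. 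From here the argument is literally the one already in the paper: form the auxiliary digraph $G$ on $\{v_1,\dots,v_a\}$ with $v_iv_j\in A(G)\iff x_iy_j\in A(D)$, note $d_G(v_i)\geq a$ by~\eqref{eq:D-to-G}, apply Thomassen's Theorem~\ref{thm:thomassen}, and rule out the tournament case (it has $d_G\leq a-1$) and the $K^*_{a/2,a/2}$ case (a vertex with $d_D(x_i)\geq2a-k$ has $d^+_G(v_i)\geq a-k>a/2$ since $k<a/2$), while Remark~\ref{rem:2-cycle} supplies the $2$-cycle. Thus $G$ is pancyclic, so $D$ is bipancyclic unless $D=C$.

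The one genuinely new bookkeeping task is that the ``exceptional'' boundary cases — the analogues of Lemma~\ref{lem:nasty}, i.e. when all the chord inequalities are tight — must still be shown to force bipancyclicity, and now there are $O(p_0)$ of them rather than two. My plan is to argue uniformly: tightness in the analogue of~\eqref{eq:a-l} forces $d^-(y_l)=a-1$ and $x_1$ to be dominated by all of $y_1,\dots,y_l$, and tightness in~\eqref{eq:l1}/\eqref{eq:l2} pins down the locations of $x^{(2)},\dots,x^{(p)}$ among $x_2,\dots,x_{l-1}$ and gives $l\geq p+2$; then, exactly as in Lemma~\ref{lem:nasty}, one produces every even cycle length by taking chords into $y_l$ from $C$ and, for the handful of lengths missed, splicing the cycle $[y_1,x^{(p)},\dots]$ (or one of its shortenings via an arc $x_qy_l$). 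The point is that the number of cycle lengths not immediately realised by a single chord into $y_l$ is at most a constant depending on $p_0$, and each is handled by a path of bounded length, so for $a\geq a_0(\lambda)$ there is always enough room on $C$ to perform the splice.

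The main obstacle I anticipate is precisely this last step: making the exceptional-case analysis genuinely uniform in $p$ rather than doing $p=2,3,4,\dots$ by hand, and in particular verifying that the short ``detour'' paths of length $\leq$ const can always be inserted without running off the end of $C$ — this is what fixes the threshold $a_0$. A secondary nuisance is that for very small $a$ (below $a_0$) the counting $a\geq p(k+1)$ together with $k>\lambda a$ may admit no valid $k$ at all, or only $k=\lfloor a/2\rfloor$; these finitely many residual digraphs are exactly what the phrase ``all but finitely many exceptional digraphs'' absorbs, so I would simply note that they are excluded by choosing $a_0$ large enough that $\lambda a_0>1$ and the chord-swap inequalities all have the required slack. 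No new ideas beyond those in Sections~\ref{sec:bipart-proof}--\ref{subsec:special-cases} are needed; the proposition is really a statement that the proof already written is robust, and the write-up would consist of re-examining each displayed inequality there with $\frac a4$ replaced by $\lambda a$ and recording that the constants that appear are bounded in terms of $\lambda$ alone.
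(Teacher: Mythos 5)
Your proposal is exactly the paper's intended argument: the paper offers no separate proof of Proposition~\ref{prop:what-if}, only the remark that the proof of Theorem~\ref{thm:bipart} in Sections~\ref{subsec:thm-bipart}--\ref{subsec:special-cases} carries through once $\frac{a}{4}$ is replaced by $\lambda a$, since then $p$ is bounded by a constant $p_0(\lambda)$ and, for $a\geq a_0(\lambda)$, the quantity $k>\lambda a$ dominates every constant appearing in the chord-swapping and boundary-case estimates (hamiltonicity being assumed, so Theorem~\ref{thm:WW} is not needed). Your tracking of where $k>\frac{a}{4}$ enters, the modified bound $l\leq\beta-k+p+1$, and the observation that the tight cases analogous to Lemma~\ref{lem:nasty} involve only boundedly many missed cycle lengths is precisely the ``careful analysis'' the paper alludes to.
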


It would be therefore very interesting to know whether condition $\Bk$ with $\max\{1,\lambda a\}<k\leq\frac{a}{2}$ and $\lambda<\frac{1}{4}$ does imply hamiltonicity of a digraph. We do not know the answer to this question, and it seems implausible that the techniques of \cite{WW} could be used to obtain such a result.
\smallskip

Another interesting problem is this: Is condition $\C$ enough to imply bipancyclicity of a strongly connected balanced bipartite digraph, modulo some exceptional cases?
Again, the techniques used in the present paper seem insufficient to have a go at it.

Finally, in the context of Theorem~\ref{thm:hamil}, it is perhaps worth mentioning that our present methods can be pushed a bit further, and the conclusions of Lemmas~\ref{lem:1} and~\ref{lem:3} hold, in fact, even if a digraph $D$ satisfies just $(\mathcal{D}_0)$. We do not include the proofs here, because they are much less elegant and require considering several special cases. They do however give hope that an analogue of Theorem~\ref{thm:hamil} with condition $\C$ replaced by $(\mathcal{D}_0)$ could be true.

%%%%%%%%%%%%%%%%%%%%%%%%%%%%%%%%%%%%%%%%%%%%%%%%%%
%\medskip
%%%%%%%%%%%%%%%%%%%%%%%%%%%%%%%%%%%%%%%%%%%%%%%%%%

%\section*{Acknowledgments}
%The author is grateful to an anonymous referee for spotting a critical mistake in an earlier version of the manuscript.

%%%%%%%%%%%%%%%%%%%%%%%%%%%%%%%%%%%%%%%%%%%%%%%%%%
\medskip
%%%%%%%%%%%%%%%%%%%%%%%%%%%%%%%%%%%%%%%%%%%%%%%%%%
%References
%%%%%%%%%%%%%%%%%%%%%%%%%%%%%%%%%%%%%%%%%%%%%%%%%%

\end{document}